\newtheorem{lemma}{Lemma}[section]
\newcommand{\be}{\begin{equation}}
\newcommand{\ee}{\end{equation}}
\begin{document}
    \title{A Novel Three-Level Time-Split MacCormack Method for Solving Two-Dimensional Viscous Coupled Burgers Equations}
   \author{\Large{Eric Ngondiep}
       \thanks{Tel.: +966501071861. E-mail addresses:\ ericngondiep@gmail.com or engondiep@imamu.edu.sa
        (Eric Ngondiep).\ }}
    %\author{
   \date{\small{Department of Mathematics and Statistics, College of Science, Imam Muhammad
   Ibn Saud Islamic University (IMSIU), 90950 Riyadh, Saudi Arabia}\\
       \text{\,}\\
       \small{Hydrological Research Centre, Institute for Geological and Mining Research, 4110 Yaounde-Cameroon}}

    \maketitle
   \textbf{Abstract.}
    In this paper, we analyze the three-level explicit time-split MacCormack procedure in the numerical solutions of two-dimensional viscous coupled Burgers' equations subject to initial and boundary conditions. The differential operators split the two-dimensional problem into two pieces so that the two-step explicit MacCormack scheme can be easily applied to each subproblem. This reduces the computational cost of the algorithm. For low Reynolds numbers, the proposed method is second order accurate in time and fourth convergent in space, while it is second order convergent in both time and space for high Reynolds numbers problems. This shows the efficiency and effectiveness of the considered method compared to a large set of numerical schemes widely studied in the literature for solving the two-dimensional time dependent nonlinear coupled Burgers' equations. Numerical examples which confirm the theoretical results are presented and discussed.
   \text{\,} \\
    \text{\,}\\
   \ \noindent {\bf Keywords: two-dimensional unsteady nonlinear coupled Burgers' equations, one-dimensional difference operators, two-step MacCormack scheme, three-level explicit time-split MacCormack method, stability and convergence rate.} \\
   \\
   {\bf AMS Subject Classification (MSC). 65M10, 65M05}.

      \section{Introduction and motivation}\label{sec1}
      A broad range of nonlinear evolutionary partial differential equations (PDEs) arise in several fields of science, namely in physics, engineering, chemistry, biology, finance and are very important in the mathematical formulation of continuum models. Moreover, systems of nonlinear PDEs have attracted much attention in the study of nonlinear time dependent equations describing wave propagation. For instance, the two-dimensional unsteady nonlinear coupled Burgers' equations are such type of PDEs. Burgers' equations occur in a wide area of applied mathematics such as, heat conduction, modeling of dynamics, acoustic wave, turbulent fluids and in continuous stochastic processes \cite{3zsd,1zsd,2zsd,bj}. Numerical analysis of Burgers' equations have attracted attention during the last decades and it represents an active filed of research to develop fast and efficient numerical schemes in the approximate solutions of such equations. In this paper, we should analyze the following two-dimensional evolutionary viscous coupled Burgers' equations
      \begin{equation}\label{1}
        u_{t}+uu_{x}+vu_{y}=\frac{1}{R}(u_{xx}+u_{yy}),\text{\,\,\,\,}v_{t}+vv_{x}+uv_{y}=\frac{1}{R}(v_{xx}+v_{yy}),\text{\,\,\,\,\,}
        (x,y)\in\Omega,\text{\,\,\,\,\,}t\in(0,T],
      \end{equation}
      with initial condition
      \begin{equation}\label{2}
        u(x,y,0)=u_{0}(x,y),\text{\,\,\,\,\,}v(x,y,0)=v_{0}(x,y),\text{\,\,\,\,\,}(x,y)\in\overline{\Omega},
      \end{equation}
      and boundary condition
      \begin{equation}\label{3}
        u(x,y,t)=\varphi_{1}(x,y,t),\text{\,\,\,\,\,}v(x,y,t)=\varphi_{2}(x,y,t),\text{\,\,\,\,\,}(x,y)\in\partial\Omega, \text{\,\,\,\,\,}t\in(0,T],
      \end{equation}
      where $u$ and $v$ are the unknown velocity, $R$ denotes the Reynolds numbers and $T>0,$ is the final time. Furthermore, $u_{t}$, $v_{t}$, $u_{x}$, $v_{x}$, $u_{y}$ and $v_{y}$ denote $\frac{\partial u}{\partial t}$, $\frac{\partial v}{\partial t}$, $\frac{\partial u}{\partial x}$,
      $\frac{\partial v}{\partial x}$, $\frac{\partial u}{\partial y}$ and $\frac{\partial v}{\partial y}$, respectively. $u_{xx}$, $u_{yy}$, $v_{xx}$ and $v_{yy}$ designate $\frac{\partial^{2}u}{\partial x^{2}}$, $\frac{\partial^{2}u}{\partial y^{2}}$, $\frac{\partial^{2}v}{\partial x^{2}}$ and $\frac{\partial^{2}v}{\partial y^{2}}$, respectively. $\Omega=(0,1)\times(0,1)$ is the fluid region, $\partial\Omega$ represents the boundary of $\Omega$. The initial conditions $u_{0}$ and $v_{0}$, and boundary conditions $\varphi_{j}$ $(j=1,2)$ are assumed to be smooth enough and satisfy the conditions $\varphi_{1}(x,y,0)=u_{0}(x,y),$ and $\varphi_{2}(x,y,0)=v_{0}(x,y),$ for any  $(x,y)\in\partial\Omega,$ so that the two-dimensional time-dependent nonlinear coupled equations $(\ref{1})$-$(\ref{3}),$ admits a smooth solution.\\

       In the literature, the two-dimensional nonstationary nonlinear coupled Burgers' equations $(\ref{1})$-$(\ref{3})$ have been solved using a large class of numerical schemes such as: spectral collocation methods, Adomain-pade approach, Fourier pseudospectral technique, Hopf-Code transformation, basis radial functions, fully implicit methods, multilevel ADI scheme, explicit-implicit procedure and domain decomposition method. For a survey of these methods, the readers should consult \cite{cf,kth,dhs,ri,en,md,go,ba,bj,el,cf1,lw,ds,dg,ked,mj,zsd} and references cited therein. For certain methods mentioned above, computations sometimes involving the nonlinear system of parabolic equations $(\ref{1})$-$(\ref{3})$ become unstable (that is, "blow up") because of the numerical oscillations. These oscillations are the results of inadequate mesh grid in regions of large gradients such as the shock waves and are accentuated when central differences are used for the first order spatial derivatives. However, because of the more restrictive stability conditions of the explicit schemes and the desire to maintain the tridiagonal matrices in the implicit methods, it is often important to modify some previous algorithms (those that provide good resolution in regions of large gradients) for multidimensional problems. For example, for nonlinear fluid flows, the two-step MacCormack procedure provides good resolution at discontinuities (see \cite{apt}, p. $187$, last paragraph).\\

        The explicit MacCormack approach is a suitable numerical scheme for solving nonlinear fluid flow equations. This technique is a predictor-corrector formulation in which the first order time derivative is approximated at the predictor and corrector stages using forward difference representations with alternate one-side differencing for the first order space derivatives. This is particularly useful for problems involving moving discontinuities. The best resolution of discontinuities occurs when the difference in the predictor arises in the direction of propagation of the discontinuity \cite{apt,30tls,19tls,22tls}. Furthermore, some previous works have indicated that this procedure is more suitable for systems of equations with nonlinear convective jacobian matrices using second order one-side explicit methods, such as Lax-Wendroff technique \cite{13tls,25tls}. For multidimensional problems, MacCormack has modified his original scheme by incorporating time splitting into the algorithm. The new method splits the explicit MacCormack into a sequence of one-dimensional operations, thereby, achieving a good stability condition. Hence, it advances the solution in each direction with the maximum allowable time step. For high Reynolds numbers flows where the viscous regions become very thin, the algorithm can be applied only in the coarse grid region. To overcome this challenge, MacCormack constructed a hybrid version of his technique, so called MacCormack rapid solver method. The rapid solver method is a combination of explicit MacCormack and an implicit scheme \cite{18tls}.\\

         Most recently, the author applied the rapid solver method of MacCormack and three-level explicit time-split MacCormack schemes in the approximate solutions of linear/nonlinear partial differential equations. Specifically, the hybrid version of MacCormack has been used to solve the mixed Stokes-Darcy and two-dimensional time-dependent incompressible Navier-Stokes equations while the three-level time-split MacCormack was applied to two-dimensional time-dependent reaction-diffusion, heat conduction, convection-diffusion equations and linear/nonlinear convection-diffusion-reaction equations with constant coefficients (diffusive term equals $1$ and convective velocity in the range: $-1$, $0.8$ and $1$). The analysis has suggested that the three-level explicit time-split MacCormack is fast, second order convergent in time and fourth order accurate in space \cite{en1ts,27tls,28tls,en2ts,tsmclcdr,en,26tls,29tls,tsmccd,tsmccdr}. We recall that the three-level time-split applies to a time dependent problem of the form: $u_{t}=A_{1}(u)+A_{2}(u),$ where $A_{j}$ ($j=1,2$) are differential operators, so that each subproblem $u_{t}=A_{j}(u)$, $j=1,2,$ is solved independently using the original MacCormack approach.\\

       In this paper, we analyze the three-level time-split MacCormack procedure for the two-dimensional evolutionary nonlinear coupled Burgers' equations $(\ref{1})$ subjects to the initial and boundary conditions $(\ref{2})$ and $(\ref{3})$, respectively. Specifically, this work is motivated by: (a) for low Reynolds numbers, the method is fast, efficient (explicit, second order accurate in time and fourth order convergent in space) and easy to implement under the time step requirement $\frac{2k}{Rh^{2}}\leq 1$, while for high Reynolds numbers and under the time step constraint $\frac{k^{\frac{3}{4}}}{h}\leq 1,$ the algorithm is efficient (explicit and second order accurate in both time and space); (b) the form of the time step restriction: $\max\left\{\frac{2k}{Rh^{2}},\frac{k^{\frac{3}{4}}}{h}\right\}\leq 1$. Indeed, equations $(\ref{1})$-$(\ref{3})$ can model nonlinear hyperbolic equations (for large Reynolds numbers $R$) and parabolic problem (case of low Reynolds numbers $R$). For instance, the time step limitation provided by the Fourier analysis for stability of explicit schemes when solving two-dimensional linear parabolic equations is given by $\frac{4ak}{h^{2}}\leq1,$ which is well known in the literature as the CFL condition. Regarding the two-dimensional unsteady nonlinear system of Burgers' equations, the classical Von Neumann stability analysis is not in the standard sense, directly applicable; (c) for low Reynolds number the corresponding time step requirement $\frac{2k}{Rh^{2}} \leq 1$, indicates that the solution is advanced in each direction with the maximum allowable time step. In order to describe this scheme, we consider the one-dimensional difference operators $L_{x}(\Delta t_{x})$ and $L_{y}(\Delta t_{y})$. The $L_{x}(\Delta t_{x})$ nonlinear operator applied to the vector
       $\begin{pmatrix}
                    u_{ij}^{n} \\
                    v_{ij}^{n} \\
                  \end{pmatrix}$,

      \begin{equation}\label{8}
        \begin{pmatrix}
                    u_{ij}^{*} \\
                    v_{ij}^{*} \\
                  \end{pmatrix}=L_{x}(\Delta t_{x})\begin{pmatrix}
                                                     u_{ij}^{n} \\
                                                     v_{ij}^{n} \\
                                                   \end{pmatrix},
       \end{equation}
      is by definition equivalent to the original MacCormack numerical scheme. In a like manner, the nonlinear operator $L_{y}(\Delta t_{y})$ is defined by
      \begin{equation}\label{9}
        \begin{pmatrix}
                    u_{ij}^{*} \\
                    v_{ij}^{*} \\
                  \end{pmatrix}=L_{y}(\Delta t_{x})\begin{pmatrix}
                                                     u_{ij}^{n} \\
                                                     v_{ij}^{n} \\
                                                   \end{pmatrix}.
       \end{equation}
       The expressions make use of a dummy time index, which is denoted by the asterisk. Putting $\Delta t_{y}=\Delta t$ and
        $\Delta t_{x}=\frac{\Delta t}{2p},$ where $p$ is a positive integer, a high order convergent scheme can be constructed by applying the
        $L_{x}$ and $Ly$ operators to
        $\begin{pmatrix}
                    u_{ij}^{n} \\
                    v_{ij}^{n} \\
                  \end{pmatrix},$ in the following manner
       \begin{equation*}
        \begin{pmatrix}
                    u_{ij}^{n+1} \\
                    v_{ij}^{n+1} \\
                  \end{pmatrix}=\left[L_{x}\left(\frac{\Delta t}{2p}\right)L_{y}\left(\frac{\Delta t}{p}\right)L_{x}\left(\frac{\Delta t}{2p}\right)\right]^{p}\begin{pmatrix}
                    u_{ij}^{n} \\
                    v_{ij}^{n} \\
                  \end{pmatrix}.
      \end{equation*}
      This formula is particularly important when solving nonlinear problems with high Reynolds numbers. In fact, for such problems the fine-grid region becomes very thin, requiring $\Delta t_{x}$ to be very small. This must cause $\Delta t_{x}$ in the $L_{x}$ operator to be small and the integer $p$ to be very large. As consequence, a substantial amount of computation time is required in the fine-grid region.\\

        In the remainder of this work, the $1$D operations $L_{x}(\Delta t_{x})$ and $L_{y}(\Delta t_{y})$ are defined by equations $(\ref{8})$ and $(\ref{9})$, respectively. Moreover, we set $p=1$ and a second-order accurate scheme (in time) can be constructed by applying the $L_{x}$ and $L_{y}$ operators to
        $\begin{pmatrix}
                    u_{ij}^{n} \\
                    v_{ij}^{n} \\
                  \end{pmatrix}$ in the following way
       \begin{equation}\label{10}
        \begin{pmatrix}
                    u_{ij}^{n+1} \\
                    v_{ij}^{n+1} \\
                  \end{pmatrix}=L_{x}\left(\frac{\Delta t}{2}\right)L_{y}(\Delta t)L_{x}\left(\frac{\Delta t}{2}\right)
                  \begin{pmatrix}
                    u_{ij}^{n} \\
                    v_{ij}^{n} \\
                  \end{pmatrix}.
       \end{equation}
       Setting $\Delta t_{y}=k,$ $\Delta t_{x}=\frac{k}{2}$ and $\Delta x=\Delta y:=h,$ equations $(\ref{8})$, $(\ref{9})$ and $(\ref{10})$ yield
       \begin{equation}\label{11}
       \begin{pmatrix}
                    u_{ij}^{*} \\
                    v_{ij}^{*} \\
                  \end{pmatrix}=L_{x}(k/2)\begin{pmatrix}
                                                     u_{ij}^{n} \\
                                                     v_{ij}^{n} \\
                                                   \end{pmatrix},\text{\,\,}\begin{pmatrix}
                    u_{ij}^{**} \\
                    v_{ij}^{**} \\
                  \end{pmatrix}=L_{y}(k)\begin{pmatrix}
                                                     u_{ij}^{*} \\
                                                     v_{ij}^{*} \\
                                                   \end{pmatrix}=L_{y}(k)L_{x}(k/2)\begin{pmatrix}
                                                     u_{ij}^{n} \\
                                                     v_{ij}^{n} \\
                                                   \end{pmatrix},\text{\,\,}\begin{pmatrix}
                    u_{ij}^{n+1} \\
                    v_{ij}^{n+1} \\
                  \end{pmatrix}=L_{x}(k/2)\begin{pmatrix}
                                                     u_{ij}^{**} \\
                                                     v_{ij}^{**} \\
                                                   \end{pmatrix}.
       \end{equation}
       To construct the algorithm, we should find simple expressions of systems of equations
       $\begin{pmatrix}
                    u_{ij}^{*} \\
                    v_{ij}^{*} \\
                  \end{pmatrix}=L_{x}(k/2)\begin{pmatrix}
                                                     u_{ij}^{n} \\
                                                     v_{ij}^{n} \\
                                                   \end{pmatrix}$ and $\begin{pmatrix}
                    u_{ij}^{**} \\
                    v_{ij}^{**} \\
                  \end{pmatrix}=L_{y}(k)\begin{pmatrix}
                                                     u_{ij}^{*} \\
                                                     v_{ij}^{*} \\
                                                   \end{pmatrix}.$
       This will be an important tool to get an explicit formula of system of equations
       $\begin{pmatrix}
                    u_{ij}^{n+1} \\
                    v_{ij}^{n+1} \\
                  \end{pmatrix}=L_{x}(k/2)\begin{pmatrix}
                                                     u_{ij}^{**} \\
                                                     v_{ij}^{**} \\
                                                   \end{pmatrix}$,
       which denotes a "one-step time-split MacCormack algorithm". For the sake of simplicity, we use notations: $w)_{ij}^{n}=w_{ij}^{n}$ and $[w+z]_{ij}^{n}=w_{ij}^{n}+z^{n}_{ij}$.\\

        In this paper, we are interested in the numerical solutions of the initial-boundary value problem $(\ref{1})$-$(\ref{3})$. Moreover, the work is focused of the following items:
       \begin{description}
         \item[1.] a full description of the three-level time-split MacCormack scheme applied to the two-dimensional time dependent nonlinear coupled Burgers' equations $(\ref{1})$-$(\ref{3});$
         \item[2.] a wide set of numerical examples which confirm the theoretical analysis are presented and critically discussed.
       \end{description}
         The two items represent our original contributions since to our knowledge there is no available result in the literature provided by a three-level explicit time-split MacCormack procedure in the numerical solution of nonstationary viscous coupled Burgers' equations $(\ref{1})$-$(\ref{3})$.\\

        The paper is organized as follows: In section $\ref{sec2},$ we describe the three-level time-split MacCormack method for solving the initial-boundary value problem $(\ref{1})$-$(\ref{3}).$ Section $\ref{sec3}$ deals with a large set of numerical experiments which confirm the theoretical analysis (stability and predicted convergence rate) of the proposed algorithm. We present in section $\ref{sec4}$ the general conclusions and future directions of works.

      \section{Detailed description of the three-level time-split scheme}\label{sec2}
        We develop a three-level explicit time-split MacCormack method for solving the two-dimensional unsteady nonlinear coupled Burgers equations $(\ref{1})$-$(\ref{3}).$\\

           Consider $N$ and $M$ be two positive integers. Set $k:=\Delta t=\frac{T}{N};$ $h:=\Delta x=\Delta y=\frac{1}{M},$ be the time step and grid spacing, respectively. Let $t^{n}=kn,$ $t^{*}=(n+r)k,$ $t^{**}=(n+s)k,$ where $0<r<s<1,$ so $t^{*}\in(t^{n},t^{n+1}),$ $t^{**}\in(t^{*},t^{n+1});$ $n=0,1,2,...,N-1;$ $x_{i}=ih;$ $y_{j}=jh;$ for $0\leq i,j\leq M$. Furthermore, we introduce the discrete regions $\Omega_{k}=\{t^{n},0\leq n\leq N\};$ $\overline{\Omega}_{h}=\{(x_{i},y_{j}),0\leq i,j\leq M\};$ $\Omega_{h}=\overline{\Omega}_{h}\cap\Omega$ and $\partial\Omega_{h}=\overline{\Omega}_{h}\cap\partial\Omega.$\\

          Suppose $\mathcal{U}_{h}=\{\phi_{ij}^{n}=\phi(x^{i},y^{j},t^{n}),n=0,1,...,N;\text{\,}0\leq i,j\leq M\},$ be the grid space of functions defined on $\Omega_{h}\times\Omega_{k}.$ Setting
       \begin{equation*}
        \delta_{t}\phi_{ij}^{*}=\frac{\phi_{ij}^{*}-\phi_{ij}^{n}}{k/2},\text{\,} \delta_{t} \phi_{ij}^{**}=\frac{\phi_{ij}^{**}-\phi_{ij}^{*}}{k},
        \text{\,}\delta_{t} \phi_{ij}^{n+1}=\frac{\phi_{ij}^{n+1}-\phi_{ij}^{**}}{k/2},\text{\,}\delta_{x}\phi_{i+\frac{1}{2},j}^{n}=\frac{\phi_{i+1,j}^{n}
        -\phi_{ij}^{n}}{h},\text{\,}\delta^{x}\phi_{ij}^{n}=\frac{\phi_{i+1,j}^{n}-\phi_{i-1,j}^{n}}{2h},
       \end{equation*}
       \begin{equation}\label{4}
        \delta_{y}\phi_{i,j+\frac{1}{2}}^{n}=\frac{\phi_{i,j+1}^{n}-\phi_{ij}^{n}}{h},\text{\,}\delta^{y}\phi_{ij}^{n}=\frac{\phi_{i,j+1}^{n}-
        \phi_{i,j-1}^{n}}{2h},
        \text{\,}\delta_{x}^{2}\phi_{ij}^{n}=\frac{\delta_{x}\phi_{i+\frac{1}{2},j}^{n}-\delta_{x}\phi_{i-\frac{1}{2},j}^{n}}{h},\text{\,}\delta_{y}^{2}
        \phi_{ij}^{n}=\frac{\delta_{y}\phi_{i,j+\frac{1}{2}}^{n}-\delta_{y}\phi_{i,j-\frac{1}{2}}^{n}}{h}.
       \end{equation}
         From the definition of the operators $\delta^{x}$ and $\delta^{y},$ it is easy to see that $\delta^{x}\phi_{ij}^{n}=\frac{1}{2}\left(\delta_{x}\phi_{i+\frac{1}{2},j}^{n}+\delta_{x}\phi_{i-\frac{1}{2},j}^{n}\right)$
       and $\delta^{y}\phi_{ij}^{n}=\frac{1}{2}\left(\delta_{y}\phi_{i,j+\frac{1}{2}}^{n}+\delta_{y}\phi_{i,j-\frac{1}{2}}^{n}\right)$.
       The discrete norms are defined by
       \begin{equation*}
        \|\phi^{n}\|_{L^{2}}=h\left(\underset{i,j=1}{\overset{M-1}\sum}|\phi_{ij}^{n}|^{2}\right)^{\frac{1}{2}},\text{\,\,}\|\delta_{x}\phi^{n}\|_{L^{2}}=
        h\left(\underset{j=1}{\overset{M-1}\sum}\underset{i=0}{\overset{M-1}\sum}|\delta_{x}\phi_{i+\frac{1}{2},j}^{n}|^{2}\right)^{\frac{1}{2}},
       \end{equation*}
       \begin{equation}\label{5}
        \|\delta_{y}\phi^{n}\|_{L^{2}}=h\left(\underset{j=0}{\overset{M-1}\sum}\underset{i=1}{\overset{M-1}\sum}|\delta_{y}\phi_{i,j
        +\frac{1}{2}}^{n}|^{2}\right)^{\frac{1}{2}},
        \text{\,}\|\delta_{\lambda}^{2}\phi^{n}\|_{L^{2}}=h\left(\underset{i,j=1}{\overset{M-1}\sum}|\delta_{\lambda}^{2}
        \phi_{ij}^{n}|^{2}\right)^{\frac{1}{2}},
       \end{equation}
       where $\lambda=x,y.$ Furthermore, the scalar products are defined as
       \begin{equation*}
        (\phi^{n},v^{n})=h^{2}\underset{i,j=1}{\overset{M-1}\sum}\phi^{n}_{ij}v_{ij}^{n}, \text{\,\,}<\delta_{x}\phi^{n},\delta_{x}v^{n}>_{x}=h^{2}\underset{j=1}
        {\overset{M-1}\sum}\underset{i=0}{\overset{M-1}\sum}\delta_{x}\phi_{i+\frac{1}{2},j}^{n}\delta_{x}v_{i+\frac{1}{2},j}^{n},
       \end{equation*}
       and
       \begin{equation}\label{6}
        <\delta_{y}\phi^{n},\delta_{y}v^{n}>_{y}=h^{2}\underset{j=0}{\overset{M-1}\sum}\underset{i=1}{\overset{M-1}\sum}\delta_{y}\phi_{i,j
        +\frac{1}{2}}^{n}\delta_{y}v_{i,j+\frac{1}{2}}^{n}.
       \end{equation}
       The sobolev space $L^{m}\left(0,T;L^{2}(\Omega)\right)$ ($m=1,2,\infty$) endowed with the norms $\||\cdot|\|_{L^{m}\left(0,T;L^{2}(\Omega)\right)}$, ($m=1,2,\infty$) are defined by
       \begin{equation*}
        \||w|\|_{L^{1}\left(0,T;L^{2}(\Omega)\right)}=k\underset{n=0}{\overset{N}\sum}\|w^{n}\|_{L^{2}},\text{\,\,\,}
        \||w|\|_{L^{2}\left(0,T;L^{2}(\Omega)\right)}=\left(k\underset{n=0}{\overset{N}\sum}\|w^{n}\|_{L^{2}}^{2}\right)^{\frac{1}{2}},\text{\,\,\,}
       \end{equation*}
       and
       \begin{equation}\label{7}
        \||w|\|_{L^{\infty}\left(0,T;L^{2}(\Omega)\right)}=\underset{0\leq n\leq N}{\max}\|w^{n}\|_{L^{2}}.
       \end{equation}
        We recall that an explicit time-split MacCormack \cite{en2ts,tsmccd} splits the original explicit MacCormack scheme into a sequence of one-dimensional operators, thereby achieving a good stability restriction. Moreover, the primary impetus in the development of the time-split algorithm is to reduce the amount of computational work to advance the solution one time step. For example, in the explicit methods, the splitting of the equations so that the various operators are advanced separately allows one, in principle, to advance each of these phases at its own stability limitation. Because the stability restriction for some of the steps can be substantially larger than for others in a typical high Reynolds numbers flow, a savings in computational effort can be realized (\cite{apt}, page $231$). In the following, we analyze the three-level explicit time-split MacCormack procedure in a numerical solution of the initial-boundary value problem $(\ref{1})$-$(\ref{3})$ under the time step requirement
        \begin{equation}\label{11**}
            \max\left\{\frac{2R^{-1}k}{h^{2}},\frac{k^{\frac{3}{4}}}{h}\right\}\leq 1,
        \end{equation}
        where $R$ denotes the Reynolds number.\\

        Expanding the Taylor series about $(x_{i},y_{j},t^{n})$ at the predictor and corrector phases with time step $k/2$ in a two-step explicit MacCormack technique to get
       \begin{equation}\label{12a}
        u_{ij}^{\overline{*}}=u^{n}_{ij}+\frac{k}{2}u_{t})_{ij}^{n}+O(k^{2}),\text{\,\,}v_{ij}^{\overline{*}}=v^{n}_{ij}+\frac{k}{2}v_{t})_{ij}^{n}+O(k^{2}),
        \text{\,\,}u_{ij}^{\overline{\overline{*}}}=u^{n}_{ij}+\frac{k}{2}u_{t})_{ij}^{\overline{*}}+O(k^{2}),\text{\,\,}v_{ij}^{\overline{\overline{*}}}=
        v^{n}_{ij}+\frac{k}{2}v_{t})_{ij}^{\overline{*}}+O(k^{2}).
       \end{equation}
        From the definition of the nonlinear operator $L_{x}(k/2),$ let consider the equations
       \begin{equation*}
        u_{t}+uu_{x}=\frac{1}{R}u_{xx}\text{\,\,and\,\,}v_{t}+uv_{x}=\frac{1}{R}v_{xx},
       \end{equation*}
       which are equivalent to
       \begin{equation}\label{11a}
        u_{t}=-uu_{x}+\frac{1}{R}u_{xx}\text{\,\,and\,\,}v_{t}=-uv_{x}+\frac{1}{R}v_{xx}.
       \end{equation}
         Substituting this and $(\ref{11a})$ into equation $(\ref{12a})$ provides
         \begin{equation}\label{12}
         u_{ij}^{\overline{*}}=u^{n}_{ij}+\frac{k}{2}[-uu_{x}+\frac{1}{R}u_{xx}]_{ij}^{n}+O(k^{2}),\text{\,\,}
         v_{ij}^{\overline{*}}=v^{n}_{ij}+\frac{k}{2}[-uv_{x}+\frac{1}{R}v_{xx}]_{ij}^{n}+O(k^{2}),
        \end{equation}
        and
        \begin{equation}\label{13}
        u_{ij}^{\overline{\overline{*}}}=u^{n}_{ij}+\frac{k}{2}[-uu_{x}+\frac{1}{R}u_{xx}]_{ij}^{\overline{*}}+O(k^{2}),\text{\,\,}
         v_{ij}^{\overline{\overline{*}}}=v^{n}_{ij}+\frac{k}{2}[-uv_{x}+\frac{1}{R}v_{xx}]_{ij}^{\overline{*}}+O(k^{2}).
       \end{equation}
       Applying the Taylor series expansion about $(x_{i},y_{j},t^{n})$ and $(x_{i},y_{j},t^{\overline{*}})$ with mesh size $h$ using both forward and backward difference representations to get
       \begin{equation*}
        u_{x,ij}^{n}=\delta_{x}u^{n}_{i+\frac{1}{2},j}+O(h),\text{\,}v_{x,ij}^{n}=\delta_{x}v^{n}_{i+\frac{1}{2},j}+O(h),
        \text{\,}u_{2x,ij}^{n}=\delta_{x}^{2}u^{n}_{ij}+O(h^{2}),\text{\,}v_{2x,ij}^{n}=\delta_{x}^{2}v^{n}_{ij}+O(h^{2}),
       \end{equation*}
       \begin{equation}\label{14}
        u_{x,ij}^{\overline{*}}=\delta_{x}u^{\overline{*}}_{i-\frac{1}{2},j}+O(h),\text{\,}v_{x,ij}^{\overline{*}}=\delta_{x}
        v^{\overline{*}}_{i-\frac{1}{2},j}+O(h),\text{\,}u_{2x,ij}^{\overline{*}}=\delta_{x}^{2}u^{\overline{*}}_{ij}+O(h^{2}),
        \text{\,}v_{2x,ij}^{\overline{*}}=\delta_{x}^{2}v^{\overline{*}}_{ij}+O(h^{2}).
       \end{equation}
        The linear operators $\delta_{x}$ and $\delta_{x}^{2}$ are given by equation $(\ref{4}).$ Plugging equations $(\ref{12})$, $(\ref{13})$ and $(\ref{14})$, direct computations result in
       \begin{equation}\label{15}
        u_{ij}^{\overline{*}}=u^{n}_{ij}+\frac{k}{2}\left[-u_{ij}^{n}\frac{u_{i+1,j}^{n}-u_{ij}^{n}}{h}+\frac{1}{R}
        \frac{u_{i+1,j}^{n}-2u_{ij}^{n}+u_{i-1,j}^{n}}{h^{2}}\right]+O(k^{2}+kh),
       \end{equation}
       \begin{equation}\label{16}
        v_{ij}^{\overline{*}}=v^{n}_{ij}+\frac{k}{2}\left[-u_{ij}^{n}\frac{v_{i+1,j}^{n}-v_{ij}^{n}}{h}+\frac{1}{R}
        \frac{v_{i+1,j}^{n}-2v_{ij}^{n}+v_{i-1,j}^{n}}{h^{2}}\right]+O(k^{2}+kh),
       \end{equation}
       and
       \begin{equation}\label{17}
        u_{ij}^{\overline{\overline{*}}}=u^{n}_{ij}+\frac{k}{2}\left[-u_{ij}^{\overline{*}}\frac{u_{ij}^{\overline{*}}-u_{i-1,j}^{\overline{*}}}{h}
        +\frac{1}{R}\frac{u_{i+1,j}^{\overline{*}}-2u_{ij}^{\overline{*}}+u_{i-1,j}^{\overline{*}}}{h^{2}}\right]+O(k^{2}+kh),
       \end{equation}
       \begin{equation}\label{18}
        v_{ij}^{\overline{\overline{*}}}=v^{n}_{ij}+\frac{k}{2}\left[-u_{ij}^{\overline{*}}\frac{v_{ij}^{\overline{*}}-v_{i-1,j}^{\overline{*}}}{h}
        +\frac{1}{R}\frac{v_{i+1,j}^{\overline{*}}-2v_{ij}^{\overline{*}}+v_{i-1,j}^{\overline{*}}}{h^{2}}\right]+O(k^{2}+kh).
       \end{equation}
        Taking the average of $u_{ij}^{\overline{*}}$ and $u_{ij}^{\overline{\overline{*}}}$ (respectively, $v_{ij}^{\overline{*}}$ and $v_{ij}^{\overline{\overline{*}}}$), it is easy to see that
        \begin{equation*}
        \frac{u_{ij}^{\overline{*}}+u_{ij}^{\overline{\overline{*}}}}{2}=u^{n}_{ij}+\frac{k}{4}\left[-\left(u_{ij}^{n}\frac{u_{i+1,j}^{n}-u_{ij}^{n}}{h}
        +u_{ij}^{\overline{*}}\frac{u_{ij}^{\overline{*}}-u_{i-1,j}^{\overline{*}}}{h}\right)
        +\frac{1}{R}\left(\frac{u_{i+1,j}^{n}-2u_{ij}^{n}+u_{i-1,j}^{n}}{h^{2}}+\right.\right.
       \end{equation*}
       \begin{equation}\label{19}
        \left.\left.\frac{u_{i+1,j}^{\overline{*}}-2u_{ij}^{\overline{*}}+u_{i-1,j}^{\overline{*}}}{h^{2}}\right)\right]+O(k^{2}+kh),
       \end{equation}
       and
       \begin{equation*}
        \frac{v_{ij}^{\overline{*}}+v_{ij}^{\overline{\overline{*}}}}{2}=v^{n}_{ij}+\frac{k}{4}\left[-\left(u_{ij}^{n}\frac{v_{i+1,j}^{n}-v_{ij}^{n}}{h}
        +u_{ij}^{\overline{*}}\frac{v_{ij}^{\overline{*}}-v_{i-1,j}^{\overline{*}}}{h}\right)
        +\frac{1}{R}\left(\frac{v_{i+1,j}^{n}-2v_{ij}^{n}+v_{i-1,j}^{n}}{h^{2}}+\right.\right.
       \end{equation*}
       \begin{equation}\label{20}
        \left.\left.\frac{v_{i+1,j}^{\overline{*}}-2v_{ij}^{\overline{*}}+v_{i-1,j}^{\overline{*}}}{h^{2}}\right)\right]+O(k^{2}+kh).
       \end{equation}
        In order to provide a detailed description of the three-level explicit time-split algorithm for solving the initial-boundary value problem  $(\ref{1})$-$(\ref{3})$, some intermediate results are needed. The following lemma considers such results.

        \begin{lemma}\label{l1}
        Let $u_{ij}^{n}=u(x_{i},y_{j},t^{n})$ and $v_{ij}^{n}=v(x_{i},y_{j},t^{n})$ be the solutions satisfying equations $(\ref{19})$ and $(\ref{20})$, respectively. Thus, it holds:
       \begin{equation}\label{21}
        u_{ij}^{\overline{*}}\delta_{x}u_{i-\frac{1}{2},j}^{\overline{*}}+u_{ij}^{n}\delta_{x}u_{i+\frac{1}{2},j}^{n}=2u_{ij}^{n}\delta^{x} u_{ij}^{n}
        +O(k+k^{2}h^{-1}),
       \end{equation}
       \begin{equation}\label{22}
        u_{ij}^{\overline{*}}\delta_{x}v_{i-\frac{1}{2},j}^{\overline{*}}+u_{ij}^{n}\delta_{x}v_{i+\frac{1}{2},j}^{n}=2u_{ij}^{n}\delta^{x} v_{ij}^{n}+O(k+k^{2}h^{-1}),
       \end{equation}
       \begin{equation}\label{23}
        \frac{1}{R}(\delta^{2}_{x}u_{ij}^{\overline{*}}+\delta^{2}_{x}u_{ij}^{n})=\frac{2}{R}\delta^{2}_{x}u_{ij}^{n}+O(k+k^{2}h^{-2}),
       \end{equation}
       and
       \begin{equation}\label{24}
        \frac{1}{R}(\delta^{2}_{x}v_{ij}^{\overline{*}}+\delta^{2}_{x}v_{ij}^{n})=\frac{2}{R}\delta^{2}_{x}v_{ij}^{n}+O(k+k^{2}h^{-2}),
       \end{equation}
       where the linear operators $\delta_{x},$ $\delta^{x}$ and $\delta^{2}_{x},$ are given by equation $(\ref{4}).$
       \end{lemma}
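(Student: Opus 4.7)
All four identities flow from a single observation: by the predictor formulas (15)--(16), the increments $u^{\overline{*}}_{ij}-u^{n}_{ij}$ and $v^{\overline{*}}_{ij}-v^{n}_{ij}$ take the form $\tfrac{k}{2}B^{n}_{ij}+O(k^{2}+kh)$, where $B^{n}$ is a standard finite-difference discretisation of the smooth field $-uu_{x}+R^{-1}u_{xx}$ (respectively $-uv_{x}+R^{-1}v_{xx}$). The strategy is to substitute $u^{\overline{*}}=u^{n}+(u^{\overline{*}}-u^{n})$ (and analogously for $v^{\overline{*}}$) on each left-hand side, expand, and bound the cross-terms using the regularity assumed on $(u,v)$.

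For (21), I would first invoke the elementary identity $2\delta^{x}u^{n}_{ij}=\delta_{x}u^{n}_{i+1/2,j}+\delta_{x}u^{n}_{i-1/2,j}$ noted after (4), so that the claim reduces to the single estimate $u^{\overline{*}}_{ij}\delta_{x}u^{\overline{*}}_{i-1/2,j}-u^{n}_{ij}\delta_{x}u^{n}_{i-1/2,j}=O(k+k^{2}/h)$. Writing this difference as $(u^{\overline{*}}_{ij}-u^{n}_{ij})\delta_{x}u^{\overline{*}}_{i-1/2,j}+u^{n}_{ij}\delta_{x}(u^{\overline{*}}-u^{n})_{i-1/2,j}$, the first product is $O(k)\cdot O(1)=O(k)$ because $u^{\overline{*}}-u^{n}=O(k)$ by (15) while $\delta_{x}u^{\overline{*}}_{i-1/2,j}$ remains bounded for smooth $u$. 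For the second product, $(u^{\overline{*}}-u^{n})_{ij}-(u^{\overline{*}}-u^{n})_{i-1,j}$ equals $\tfrac{k}{2}(B^{n}_{ij}-B^{n}_{i-1,j})+O(k^{2}+kh)$; the Lipschitz bound $B^{n}_{ij}-B^{n}_{i-1,j}=O(h)$ then gives $O(kh)+O(k^{2}+kh)$ for the numerator and $O(k)+O(k^{2}/h)$ after division by $h$. Identity (22) follows by the same computation, since structurally $L_{x}$ differences $v$ while the velocity coefficient $u$ is common to both MacCormack substeps.

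For (23)--(24), I would not expand a product but instead write $\delta^{2}_{x}u^{\overline{*}}_{ij}-\delta^{2}_{x}u^{n}_{ij}=\delta^{2}_{x}(u^{\overline{*}}-u^{n})_{ij}$ and substitute (15). The smooth part contributes $\tfrac{k}{2}\delta^{2}_{x}B^{n}_{ij}=O(k)$, since the second discrete differences of a regularly varying grid function remain bounded; the $O(k^{2}+kh)$ remainder, divided by $h^{2}$, is $O(k^{2}/h^{2}+k/h)$, and the mixed term $O(k/h)$ is absorbed in $O(k+k^{2}/h^{2})$ under the CFL-type constraint (\ref{11**}). Adding the pre-existing $\delta^{2}_{x}u^{n}_{ij}$ piece and multiplying by $1/R$ yields the stated identity, with (24) obtained verbatim by swapping $u$ for $v$.

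The main delicate point I expect is the rigorous justification that the hidden $O(k^{2}+kh)$ remainder in the predictor expansions (15)--(18) is itself a grid field whose first and second finite differences do not amplify by extra factors of $1/h$ and $1/h^{2}$ respectively; this requires tracing the Taylor remainders back to derivatives of $u$ and $v$, and exploiting the assumed smoothness of $(u,v)$ together with the boundary compatibility conditions stated after (3). Once this bookkeeping is in place, the four estimates are direct consequences of the single decomposition outlined above.
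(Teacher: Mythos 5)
Your treatment of (\ref{21})--(\ref{22}) is sound and is essentially the paper's own argument in a slightly cleaner form: you split $u^{\overline{*}}\delta_{x}u^{\overline{*}}_{i-\frac12,j}-u^{n}\delta_{x}u^{n}_{i-\frac12,j}$ by the product rule and use a Lipschitz bound on the discretised field $B^{n}$, whereas the paper multiplies out the predictor formula directly and absorbs the same $O(h)$ differences; both give $O(k+k^{2}h^{-1})$ and then invoke $2\delta^{x}u^{n}_{ij}=\delta_{x}u^{n}_{i+\frac12,j}+\delta_{x}u^{n}_{i-\frac12,j}$.

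The gap is in your argument for (\ref{23})--(\ref{24}). You substitute the finite-difference form (\ref{15}), whose remainder is $O(k^{2}+kh)$, take $\delta_{x}^{2}$, and claim the resulting $O(k/h)$ term is ``absorbed in $O(k+k^{2}/h^{2})$ under the CFL-type constraint (\ref{11**})''. That absorption is false: under either regime of (\ref{11**}) one has $k<h$, and for $k<h<1$ the quantity $k/h$ exceeds both $k$ and $k^{2}/h^{2}$ (e.g.\ $k\sim h^{2}$ gives $k/h\sim h$ while $k+k^{2}/h^{2}\sim h^{2}$; $k\sim h^{4/3}$ gives $k/h\sim h^{1/3}$ while $k+k^{2}/h^{2}\sim h^{2/3}$). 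Nor is the extra term harmless downstream: an honest $O(k/h)$ in (\ref{23}) would contribute $O(k^{2}/h)$ to (\ref{33}), which for $k=\frac{R}{2}h^{2}$ is $O(h^{3})$ and destroys the claimed $O(k^{2}+h^{4})$ accuracy at low Reynolds numbers. The paper avoids this precisely by \emph{not} using (\ref{15}) here: it substitutes the exact-derivative predictor expansion (\ref{12}), whose remainder is purely $O(k^{2})$, so that after dividing by $h^{2}$ the only new contribution is $\frac{k}{2h^{2}}$ times the second difference of the smooth field $-uu_{x}+\frac{1}{R}u_{xx}$, which the Taylor expansions (\ref{31}), (\ref{32}), (\ref{32*}) show to be $O(h^{2})$, yielding $O(k)+O(k^{2}h^{-2})$ as claimed. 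You do flag exactly this issue as ``the main delicate point'' (that the hidden remainder's differences must not amplify by $1/h^{2}$), but you neither carry out that bookkeeping nor can the shortcut you offer in its place replace it; as written, (\ref{23})--(\ref{24}) are not established.
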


       \begin{proof}
       We must prove only equations $(\ref{21})$ and $(\ref{23}).$ The proof for equations $(\ref{22})$ and $(\ref{24})$ are similar.\\

       It comes from the definition of the linear operators $\delta_{x}$ and $\delta^{2}_{x}$ that
       \begin{equation}\label{25}
       \delta_{x}u_{i-\frac{1}{2},j}^{\overline{*}}=\frac{u_{ij}^{\overline{*}}-u_{i-1,j}^{\overline{*}}}{h},\text{\,\,}
       \delta_{x}^{2}u_{ij}^{\overline{*}}=\frac{u_{i+1,j}^{\overline{*}}-2u_{ij}^{\overline{*}}+u_{i-1,j}^{\overline{*}}}{h^{2}},
       \end{equation}
       and
       \begin{equation}\label{25*}
       \delta_{x}v_{i-\frac{1}{2},j}^{\overline{*}}=\frac{v_{ij}^{\overline{*}}-v_{i-1,j}^{\overline{*}}}{h},\text{\,\,}
       \delta_{x}^{2}v_{ij}^{\overline{*}}=\frac{v_{i+1,j}^{\overline{*}}-2v_{ij}^{\overline{*}}+v_{i-1,j}^{\overline{*}}}{h^{2}}.
       \end{equation}
       Combining equations $(\ref{25})$ and $(\ref{16}),$ straightforward calculations give
       \begin{equation*}
        u_{ij}^{\overline{*}}\delta_{x}u_{i-\frac{1}{2},j}^{\overline{*}}=\frac{1}{h}(u_{ij}^{\overline{*}}-u_{i-1,j}^{\overline{*}})
        \left\{u^{n}_{ij}+\frac{k}{2}\left[-u_{ij}^{n}\delta_{x}u_{i+\frac{1}{2},j}^{n}+\frac{1}{R}
        \delta_{x}^{2}u_{ij}^{n}\right]+O(k^{2}+kh)\right\}=
       \end{equation*}
       \begin{equation*}
        \frac{1}{h}\left\{u^{n}_{ij}+\frac{k}{2}\left[-u_{ij}^{n}\delta_{x}u_{i+\frac{1}{2},j}^{n}+\frac{1}{R}\delta_{x}^{2}u_{ij}^{n}\right]
        -u^{n}_{i-1,j}-\frac{k}{2}\left[-u_{i-1,j}^{n}\delta_{x}u_{i-\frac{1}{2},j}^{n}+\frac{1}{R}\delta_{x}^{2}u_{i-1,j}^{n}\right]
        +O(k^{2}+kh)\right\}\times
       \end{equation*}
       \begin{equation*}
       \left\{u^{n}_{ij}+\frac{k}{2}\left[-u_{ij}^{n}\delta_{x}u_{i+\frac{1}{2},j}^{n}+\frac{1}{R}\delta_{x}^{2}u_{ij}^{n}\right]
        +O(k^{2}+kh)\right\}=\left\{\delta_{x}u_{i-\frac{1}{2},j}^{n}+\frac{k}{2h}\left[-u_{ij}^{n}\delta_{x}u_{i+\frac{1}{2},j}^{n}
        +u_{i-1,j}^{n}\delta_{x}u_{i-\frac{1}{2},j}^{n}\right.\right.
       \end{equation*}
       \begin{equation*}
        \left.\left.+\frac{1}{R}\left(\delta_{x}^{2}u_{ij}^{n}-\delta_{x}^{2}u_{i-1,j}^{n}\right)\right]+O(k+k^{2}h^{-1})\right\}\left\{u^{n}_{ij}+
        \frac{k}{2}\left[-u_{ij}^{n}\delta_{x}u_{i+\frac{1}{2},j}^{n}+\frac{1}{R}\delta_{x}^{2}u_{ij}^{n}\right]+O(k^{2}+h)\right\}.
       \end{equation*}
       Expanding this and absorbing the term of second order into the infinitesimal term $O(k+k^{2}h^{-1})$, we obtain
       \begin{equation*}
       u_{ij}^{\overline{*}}\delta_{x}u_{i-\frac{1}{2},j}^{\overline{*}}=u_{ij}^{n}\delta_{x}u_{i-\frac{1}{2},j}^{2}+
       \frac{k}{2}\left[-u_{ij}^{n}\delta_{x}u_{i-\frac{1}{2},j}^{n}\delta_{x}u_{i+\frac{1}{2},j}^{n}
       +\frac{1}{R}\delta_{x}u_{i-\frac{1}{2},j}^{n}\delta_{x}^{2}u_{ij}^{n}-\frac{1}{h}(u_{ij}^{n})^{2}\delta_{x}u_{i+\frac{1}{2},j}^{n}+\right.
       \end{equation*}
       \begin{equation*}
        \left.\frac{1}{h}u_{ij}^{n}u_{i-1,j}^{n}\delta_{x}u_{i-\frac{1}{2},j}^{n}+\frac{1}{R}u_{ij}^{n}\delta_{x}^{2}(\delta_{x}u_{i-\frac{1}{2},j}^{n})
        \right]+O(k+k^{2}h^{-1}),
       \end{equation*}
       which can be rewritten as
       \begin{equation}\label{26}
       u_{ij}^{\overline{*}}\delta_{x}u_{i-\frac{1}{2},j}^{\overline{*}}=u_{ij}^{n}\delta_{x}u_{i-\frac{1}{2},j}^{n}+O(k+k^{2}h^{-1}),
       \end{equation}
       where we have also absorbed the first order term into the error term $O(k+k^{2}h^{-1}).$ Using approximation $(\ref{26}),$ it is too simple to observe that
       \begin{equation}\label{27}
       u_{ij}^{\overline{*}}\delta_{x}u_{i-\frac{1}{2},j}^{\overline{*}}+u_{ij}^{n}\delta_{x}u_{i+\frac{1}{2},j}^{n}=
       u_{ij}^{n}(\delta_{x}u_{i-\frac{1}{2},j}^{n}+\delta_{x}u_{i+\frac{1}{2},j}^{n})+O(k+k^{2}h^{-1}).
       \end{equation}
       But, it comes from the definition of the linear operator $\delta^{x}$ given by equation $(\ref{4})$ that $\delta^{x}u_{ij}^{n}=\frac{1}{2}(\delta_{x}u_{i-\frac{1}{2},j}^{n}+\delta_{x}u_{i+\frac{1}{2},j}^{n}).$ This fact, together with relation $(\ref{27})$ yield
       \begin{equation}\label{28}
       u_{ij}^{\overline{*}}\delta_{x}u_{i-\frac{1}{2},j}^{\overline{*}}+u_{ij}^{n}\delta_{x}u_{i+\frac{1}{2},j}^{n}=
       2u_{ij}^{n}\delta^{x}u_{ij}^{n}+O(k+k^{2}h^{-1}).
       \end{equation}
       Furthermore, plugging the first equation in $(\ref{12})$ together with the second equation in $(\ref{25}),$ direct calculations result in
       \begin{equation*}
        \delta_{x}^{2}u_{ij}^{\overline{*}}=\frac{u_{i+1,j}^{\overline{*}}-2u_{ij}^{\overline{*}}+u_{i-1,j}^{\overline{*}}}{h^{2}}=
        \frac{1}{h^{2}}\left\{u^{n}_{i+1,j}+\frac{k}{2}[-uu_{x}+\frac{1}{R}u_{xx}]_{i+1,j}^{n}-2u^{n}_{ij}
        -k[-uu_{x}+\frac{1}{R}u_{xx}]_{ij}^{n}\right.
       \end{equation*}
       \begin{equation*}
        \left.+u^{n}_{i-1,j}+\frac{k}{2}[-uu_{x}+\frac{1}{R}u_{xx}]_{i-1,j}^{n}\right\}+O(k^{2}h^{-2})=\delta_{x}^{2}u_{ij}^{n}
        +\frac{k}{2h^{2}}\left\{-u^{n}_{i+1,j}u^{n}_{x,i+1,j}+2u^{n}_{ij}u^{n}_{x,ij}-\right.
       \end{equation*}
       \begin{equation}\label{30}
        \left.u^{n}_{i-1,j}u^{n}_{x,i-1,j}+\frac{1}{R}\left(u^{n}_{2x,i+1,j}-2u^{n}_{2x,ij}+u^{n}_{2x,i-1,j}\right)\right\}+O(k^{2}h^{-2}).
       \end{equation}
        Expanding the Taylor series about $(x_{i},y_{j},t^{n})$ with mesh size $h$ using both forward and backward difference representations, it is not hard to see that
       \begin{equation*}
       u_{i+1,j}^{n}=u^{n}_{ij}+hu_{x,ij}^{n}+O(h^{2}),\text{\,}u_{x,i+1,j}^{n}=u_{x,ij}^{n}+hu_{2x,ij}^{n}+O(h^{2}),
       \text{\,}u_{2x,i+1,j}^{n}=u_{2x,ij}^{n}+hu_{3x,ij}^{n}+O(h^{2}),
       \end{equation*}
       \begin{equation}\label{31}
        u_{i-1,j}^{n}=u^{n}_{ij}-hu_{x,ij}^{n}+O(h^{2}),\text{\,}u_{x,i-1,j}^{n}=u_{x,ij}^{n}-hu_{2x,ij}^{n}+O(h^{2}),
       \text{\,}u_{2x,i-1,j}^{n}=u_{2x,ij}^{n}-hu_{3x,ij}^{n}+O(h^{2}).
       \end{equation}
       Using this, it is easy to see that
       \begin{equation}\label{32}
       u_{i+1,j}^{n}u_{x,i+1,j}^{n}=u_{ij}^{n}u_{x,ij}^{n}+h[u_{ij}^{n}u_{x,ij}^{n}+(u_{x,ij}^{n})^{2}]+O(h^{2}),
       \end{equation}
       \begin{equation}\label{32*}
        u_{i-1,j}^{n}u_{x,i-1,j}^{n}=u_{ij}^{n}u_{x,ij}^{n}-h[u_{ij}^{n}u_{x,ij}^{n}+(u_{x,ij}^{n})^{2}]+O(h^{2}).
       \end{equation}
       Substituting equations $(\ref{31}),$ $(\ref{32})$ and $(\ref{32*})$ into  $(\ref{30})$, and after simplification we obtain
       \begin{equation*}
        \frac{u_{i+1,j}^{\overline{*}}-2u_{ij}^{\overline{*}}+u_{i-1,j}^{\overline{*}}}{h^{2}}
        =\delta_{x}^{2}u_{ij}^{n}+\frac{k}{2h^{2}}\left\{O(h^{2})+O(h^{2})\right\}+O(k^{2}h^{-2}),
       \end{equation*}
       which can be rewritten as
       \begin{equation*}
        \delta_{x}^{2}u_{ij}^{\overline{*}}=\delta_{x}^{2}u_{ij}^{n}+O(k+k^{2}h^{-2}).
       \end{equation*}
       Thus,
       \begin{equation*}
        \frac{1}{R}\left(\delta_{x}^{2}u_{ij}^{\overline{*}}+\delta_{x}^{2}u_{ij}^{n}\right)=\frac{2}{R}\delta_{x}^{2}u_{ij}^{n}+O(k+k^{2}h^{-2}).
       \end{equation*}
       This ends the proof of Lemma $\ref{l1}.$
       \end{proof}
       Now, using Lemma $\ref{l1},$ we are ready to give a full description of the three-level explicit time-split MacCormack approach applied to the two-dimensional time-dependent nonlinear coupled Burgers' equations $(\ref{1})$-$(\ref{3})$ and to provide the convergence rate of the algorithm.

       Combining equations $(\ref{19}),$ $(\ref{21})$ and $(\ref{23}),$ (respectively, $(\ref{20}),$ $(\ref{22})$ and $(\ref{24}),$), direct calculations give
       \begin{equation}\label{33}
        \frac{u_{ij}^{\overline{*}}+u_{ij}^{\overline{\overline{*}}}}{2}=u^{n}_{ij}+\frac{k}{2}\left[-u_{ij}^{n}\delta^{x}u_{ij}^{n}
        +\frac{1}{R}\delta^{2}_{x}u_{ij}^{n}\right]+O(k^{2}+k^{3}h^{-1}+k^{3}h^{-2}),
       \end{equation}
       and
       \begin{equation}\label{34}
        \frac{v_{ij}^{\overline{*}}+v_{ij}^{\overline{\overline{*}}}}{2}=v^{n}_{ij}+\frac{k}{2}\left[-u_{ij}^{n}\delta^{x}v_{ij}^{n}
        +\frac{1}{R}\delta^{2}_{x}v_{ij}^{n}\right]+O(k^{2}+k^{3}h^{-1}+k^{3}h^{-2}).
       \end{equation}
        For low Reynolds numbers, the time step restriction $(\ref{11**})$ is dominated by the inequality $2R^{-1}k\leq h^{2},$ so it is easy to see that
        \begin{equation*}
        k^{3}h^{-1}\leq \frac{R^{3}}{8}h^{5} \text{\,\,\,and\,\,\,} k^{3}h^{-2}\leq \frac{R^{3}}{8}h^{4}.
        \end{equation*}
        Utilizing this, equations $(\ref{33})$ and $(\ref{34})$ become
        \begin{equation}\label{35}
        \frac{u_{ij}^{\overline{*}}+u_{ij}^{\overline{\overline{*}}}}{2}=u^{n}_{ij}+\frac{k}{2}\left[-u_{ij}^{n}\delta^{x}u_{ij}^{n}
        +\frac{1}{R}\delta^{2}_{x}u_{ij}^{n}\right]+O(k^{2}+h^{4}),
       \end{equation}
       and
       \begin{equation}\label{36}
        \frac{v_{ij}^{\overline{*}}+v_{ij}^{\overline{\overline{*}}}}{2}=v^{n}_{ij}+\frac{k}{2}\left[-u_{ij}^{n}\delta^{x}v_{ij}^{n}
        +\frac{1}{R}\delta^{2}_{x}v_{ij}^{n}\right]+O(k^{2}+h^{4}).
       \end{equation}
        In a like manner, for high Reynolds numbers, the time step restriction $(\ref{11**})$ is dominated by estimate $k^{\frac{3}{4}}\leq h,$ so it holds
        \begin{equation*}
        k^{3}h^{-1}\leq h^{3} \text{\,\,\,and\,\,\,} k^{3}h^{-2}\leq h^{2}.
        \end{equation*}
        Thus, equations $(\ref{33})$ and $(\ref{34})$ imply
        \begin{equation}\label{35*}
        \frac{u_{ij}^{\overline{*}}+u_{ij}^{\overline{\overline{*}}}}{2}=u^{n}_{ij}+\frac{k}{2}\left[-u_{ij}^{n}\delta^{x}u_{ij}^{n}
        +\frac{1}{R}\delta^{2}_{x}u_{ij}^{n}\right]+O(k^{2}+h^{2}),
       \end{equation}
       and
       \begin{equation}\label{36*}
        \frac{v_{ij}^{\overline{*}}+v_{ij}^{\overline{\overline{*}}}}{2}=v^{n}_{ij}+\frac{k}{2}\left[-u_{ij}^{n}\delta^{x}v_{ij}^{n}
        +\frac{1}{R}\delta^{2}_{x}v_{ij}^{n}\right]+O(k^{2}+h^{2}).
       \end{equation}
        Analogously, to construct the nonlinear operator $L_{y}(k)$, we consider the following one-dimensional system of equations
       \begin{equation*}
        u_{t}+vu_{y}=\frac{1}{R}u_{yy}\text{\,\,and\,\,}v_{t}+vv_{x}=\frac{1}{R}v_{yy}.
       \end{equation*}
          Applying the Taylor series expansion about $(x_{i},y_{i},t^{*})$ (where $t^{*}\in(t^{n},t^{n+1}),$ is the starting time used at the next phase in a time-split MacCormack scheme) with time step $k$ and mesh size $h$ using both forward and backward difference representations, it is not difficult to show that
         \begin{equation}\label{37}
        \frac{u_{ij}^{\overline{**}}+u_{ij}^{\overline{\overline{**}}}}{2}=u^{*}_{ij}+k\left[-v_{ij}^{*}\delta^{y}u_{ij}^{*}
        +\frac{1}{R}\delta^{2}_{y}u_{ij}^{*}\right]+O(k^{2}+h^{4}),
       \end{equation}
       \begin{equation}\label{38}
        \frac{v_{ij}^{\overline{**}}+v_{ij}^{\overline{\overline{**}}}}{2}=v^{*}_{ij}+k\left[-v_{ij}^{*}\delta^{y}v_{ij}^{*}
        +\frac{1}{R}\delta^{2}_{y}v_{ij}^{*}\right]+O(k^{2}+h^{4}),
       \end{equation}
         for low Reynolds numbers, and for large Reynolds numbers
         \begin{equation}\label{37*}
        \frac{u_{ij}^{\overline{**}}+u_{ij}^{\overline{\overline{**}}}}{2}=u^{*}_{ij}+k\left[-v_{ij}^{*}\delta^{y}u_{ij}^{*}
        +\frac{1}{R}\delta^{2}_{y}u_{ij}^{*}\right]+O(k^{2}+h^{2}),
       \end{equation}
       \begin{equation}\label{38*}
        \frac{v_{ij}^{\overline{**}}+v_{ij}^{\overline{\overline{**}}}}{2}=v^{*}_{ij}+k\left[-v_{ij}^{*}\delta^{y}v_{ij}^{*}
        +\frac{1}{R}\delta^{2}_{y}v_{ij}^{*}\right]+O(k^{2}+h^{2}).
       \end{equation}
        Finally, considering the one-dimensional equations system
         \begin{equation*}
        u_{t}=-vu_{x}+\frac{1}{R}u_{xx}\text{\,\,and\,\,}v_{t}=-uv_{x}+\frac{1}{R}v_{xx},
       \end{equation*}
          expanding the Taylor series about $(x_{i},y_{j},t^{**})$ (where $t^{**}$ represents the time used at the last phase in the time-split MacCormack procedure) at both predictor and corrector phases with time step $k/2$ and mesh size $h,$ using forward and backward difference formulations, it is not hard to show that when the Reynolds numbers are small
         \begin{equation}\label{39}
        \frac{u_{ij}^{\overline{n+1}}+u_{ij}^{\overline{\overline{n+1}}}}{2}=u^{**}_{ij}+\frac{k}{2}\left[-u_{ij}^{**}\delta^{x}u_{ij}^{**}
        +\frac{1}{R}\delta^{2}_{x}u_{ij}^{**}\right]+O(k^{2}+h^{4}),
       \end{equation}
       \begin{equation}\label{40}
        \frac{v_{ij}^{\overline{n+1}}+v_{ij}^{\overline{\overline{n+1}}}}{2}=v^{**}_{ij}+\frac{k}{2}\left[-u_{ij}^{**}\delta^{x}v_{ij}^{**}
        +\frac{1}{R}\delta^{2}_{x}v_{ij}^{**}\right]+O(k^{2}+h^{4}),
       \end{equation}
        and for high Reynolds numbers
        \begin{equation}\label{39*}
        \frac{u_{ij}^{\overline{n+1}}+u_{ij}^{\overline{\overline{n+1}}}}{2}=u^{**}_{ij}+\frac{k}{2}\left[-u_{ij}^{**}\delta^{x}u_{ij}^{**}
        +\frac{1}{R}\delta^{2}_{x}u_{ij}^{**}\right]+O(k^{2}+h^{2}),
       \end{equation}
       \begin{equation}\label{40*}
        \frac{v_{ij}^{\overline{n+1}}+v_{ij}^{\overline{\overline{n+1}}}}{2}=v^{**}_{ij}+\frac{k}{2}\left[-u_{ij}^{**}\delta^{x}v_{ij}^{**}
        +\frac{1}{R}\delta^{2}_{x}v_{ij}^{**}\right]+O(k^{2}+h^{2}).
       \end{equation}
         In order to develop the three-level explicit time-split MacCormack method for solving the two-dimensional evolutionary nonlinear coupled Burgers' equations $(\ref{1})$ with initial and boundary conditions $(\ref{2})$-$(\ref{3}),$ we must neglect the infinitesimal terms $O(k^{2}+h^{4})$ and $O(k^{2}+h^{2})$ in equations $(\ref{35})$-$(\ref{40*})$. In addition, we introduce the terms $u_{ij}^{*},$ $u_{ij}^{**}$, $u^{n+1}_{ij}$, $v_{ij}^{*},$ $v_{ij}^{**}$ and $v^{n+1}_{ij}$ which are defined as follows,
       \begin{equation*}
        u_{ij}^{*}=\frac{u_{ij}^{\overline{*}}+u_{ij}^{\overline{\overline{*}}}}{2},\text{\,\,\,}u_{ij}^{**}=
        \frac{u_{ij}^{\overline{**}}+u_{ij}^{\overline{\overline{**}}}}{2}\text{\,\,\,\,\,\,and\,\,\,\,\,\,}u_{ij}^{n+1}=
        \frac{u_{ij}^{\overline{n+1}}+u_{ij}^{\overline{\overline{n+1}}}}{2},
       \end{equation*}
        \begin{equation*}
        v_{ij}^{*}=\frac{v_{ij}^{\overline{*}}+v_{ij}^{\overline{\overline{*}}}}{2},\text{\,\,\,}v_{ij}^{**}=
        \frac{v_{ij}^{\overline{**}}+v_{ij}^{\overline{\overline{**}}}}{2}\text{\,\,\,\,\,\,and\,\,\,\,\,\,}v_{ij}^{n+1}=
        \frac{v_{ij}^{\overline{n+1}}+v_{ij}^{\overline{\overline{n+1}}}}{2}.
       \end{equation*}
       Thus, systems of equations
       \begin{equation}\label{44}
       \begin{pmatrix}
                    u_{ij}^{*} \\
                    v_{ij}^{*} \\
                  \end{pmatrix}=L_{x}(k/2)\begin{pmatrix}
                                                     u_{ij}^{n} \\
                                                     v_{ij}^{n} \\
                                                   \end{pmatrix},\text{\,\,}\begin{pmatrix}
                    u_{ij}^{**} \\
                    v_{ij}^{**} \\
                  \end{pmatrix}=L_{y}(k)\begin{pmatrix}
                                                     u_{ij}^{*} \\
                                                     v_{ij}^{*} \\
                                                   \end{pmatrix}
                                                   ,\text{\,\,}\begin{pmatrix}
                    u_{ij}^{n+1} \\
                    v_{ij}^{n+1} \\
                  \end{pmatrix}=L_{x}(k/2)\begin{pmatrix}
                                                     u_{ij}^{**} \\
                                                     v_{ij}^{**} \\
                                                   \end{pmatrix},
       \end{equation}
       are by definition equivalent to
       \begin{equation}\label{41}
        u^{*}=u^{n}_{ij}+\frac{k}{2}\left[-u_{ij}^{n}\delta^{x}u_{ij}^{n}+\frac{1}{R}\delta^{2}_{x}u_{ij}^{n}\right],\text{\,\,\,}
        v^{*}=v^{n}_{ij}+\frac{k}{2}\left[-u_{ij}^{n}\delta^{x}v_{ij}^{n}+\frac{1}{R}\delta^{2}_{x}v_{ij}^{n}\right],
       \end{equation}
        \begin{equation}\label{42}
        u^{**}=u^{*}_{ij}+k\left[-v_{ij}^{*}\delta^{y}u_{ij}^{*}+\frac{1}{R}\delta^{2}_{y}u_{ij}^{*}\right],\text{\,\,\,}
        v^{**}=v^{*}_{ij}+k\left[-v_{ij}^{*}\delta^{y}v_{ij}^{*}+\frac{1}{R}\delta^{2}_{y}v_{ij}^{*}\right],
       \end{equation}
        \begin{equation}\label{43}
        u^{n+1}=u^{**}_{ij}+\frac{k}{2}\left[-u_{ij}^{**}\delta^{x}u_{ij}^{**}+\frac{1}{R}\delta^{2}_{x}u_{ij}^{**}\right],\text{\,\,\,}
        v^{n+1}=v^{**}_{ij}+\frac{k}{2}\left[-u_{ij}^{**}\delta^{x}v_{ij}^{**}+\frac{1}{R}\delta^{2}_{x}v_{ij}^{**}\right].
       \end{equation}
       Using relation $(\ref{44}),$ it is easy to see that the nonlinear operator $L_{x}(k/2)L_{y}(k)L_{x}(k/2)$ is symmetric and satisfies
        \begin{equation}\label{47}
        \begin{pmatrix}
                    u_{ij}^{n+1} \\
                    v_{ij}^{n+1} \\
                  \end{pmatrix}=L_{x}(k/2)L_{y}(k)L_{x}(k/2)\begin{pmatrix}
                                                     u_{ij}^{n} \\
                                                     v_{ij}^{n} \\
                                                   \end{pmatrix}.
        \end{equation}

        This fact, together with relations $(\ref{35})$-$(\ref{40*})$ suggest that the three-level time-split explicit MacCormack approach applied to the parabolic system of nonlinear partial differential equations $(\ref{1})$-$(\ref{3})$ is an explicit predictor-corrector scheme, second order accurate in time and fourth order convergent in space for small Reynolds numbers. Furthermore, for high Reynolds numbers the method is second order convergent in both time and space. We confirm this convergence rate in section $\ref{sec3}$ by performing a large set of numerical evidences. Finally, from the definition of the linear operators $"\delta^{x}"$, $"\delta^{y}"$, $"\delta_{x}^{2}"$ and $"\delta_{y}^{2}"$ given in relation $(\ref{4})$, equations $(\ref{41})$-$(\ref{43})$ are equivalent to, for $n=0,1,...,N-1;$
       \begin{equation}\label{48}
        u_{ij}^{*}=u^{n}_{ij}+\frac{k}{2}\left[-u_{ij}^{n}\frac{u_{i+1,j}^{n}-u_{i-1,j}^{n}}{2h}+\frac{1}{R}
        \frac{u_{i+1,j}^{n}-2u_{ij}^{n}+u_{i-1,j}^{n}}{h^{2}}\right],\text{\,\,}i=1,2,...,M-1,\text{\,\,} j=0,1,...,M,
       \end{equation}
       \begin{equation}\label{49}
        v_{ij}^{*}=v^{n}_{ij}+\frac{k}{2}\left[-u_{ij}^{n}\frac{v_{i+1,j}^{n}-v_{i-1,j}^{n}}{2h}+\frac{1}{R}
        \frac{v_{i+1,j}^{n}-2v_{ij}^{n}+v_{i-1,j}^{n}}{h^{2}}\right],\text{\,\,}i=1,2,...,M-1,\text{\,\,} j=0,1,...,M,
       \end{equation}
       \begin{equation}\label{50}
        u_{ij}^{**}=u^{*}_{ij}+k\left[-v_{ij}^{*}\frac{u_{i,j+1}^{*}-u_{i,j-1}^{*}}{2h}+\frac{1}{R}
        \frac{u_{i,j+1}^{*}-2u_{ij}^{*}+u_{i,j-1}^{*}}{h^{2}}\right],\text{\,\,}i=0,1,...,M,\text{\,\,} j=1,2,...,M-1,
       \end{equation}
       \begin{equation}\label{51}
        v_{ij}^{**}=v^{*}_{ij}+k\left[-v_{ij}^{*}\frac{v_{i,j+1}^{*}-v_{i,j-1}^{*}}{2h}+\frac{1}{R}
        \frac{v_{i,j+1}^{*}-2v_{ij}^{*}+v_{i,j-1}^{*}}{h^{2}}\right],\text{\,\,}i=0,1,...,M,\text{\,\,} j=1,2,...,M-1,
       \end{equation}
        \begin{equation}\label{52}
        u_{ij}^{n+1}=u^{**}_{ij}+\frac{k}{2}\left[-u_{ij}^{**}\frac{u_{i+1,j}^{**}-u_{i-1,j}^{**}}{2h}+\frac{1}{R}
        \frac{u_{i+1,j}^{**}-2u_{ij}^{**}+u_{i-1,j}^{**}}{h^{2}}\right],\text{\,\,}i=1,2,...,M-1,\text{\,\,} j=0,1,...,M,
       \end{equation}
       \begin{equation}\label{53}
        v_{ij}^{n+1}=v^{**}_{ij}+\frac{k}{2}\left[-u_{ij}^{**}\frac{v_{i+1,j}^{**}-v_{i-1,j}^{**}}{2h}+\frac{1}{R}
        \frac{v_{i+1,j}^{**}-2v_{ij}^{**}+v_{i-1,j}^{**}}{h^{2}}\right],\text{\,\,}i=1,2,...,M-1,\text{\,\,} j=0,1,...,M,
       \end{equation}
        with initial and boundary conditions,
        \begin{equation*}
        u_{ij}^{0}=u_{0}(x_{i},y_{j}),\text{\,}u_{i0}^{n}=\varphi^{n}_{1,i0},\text{\,}u_{iM}^{n}=\varphi^{n}_{1,iM},
        \text{\,}u_{0j}^{n}=\varphi^{n}_{1,0j},\text{\,}u_{Mj}^{n}=\varphi^{n}_{1,Mj},\text{\,\,}u_{0j}^{*}=\varphi^{n+1}_{1,0j},\text{\,}u_{Mj}^{*}=
        \varphi^{n+1}_{1,Mj}, \text{\,}u_{j0}^{*}=\varphi^{n+1}_{1,j0},
        \end{equation*}
        \begin{equation*}
        u_{jM}^{*}=\varphi^{n+1}_{1,jM},\text{\,\,\,}u_{0j}^{**}=\varphi^{n+1}_{1,0j},\text{\,}u_{Mj}^{**}=\varphi^{n+1}_{1,Mj},\text{\,}
        u_{j0}^{**}=\varphi^{n+1}_{1,j0},\text{\,}u_{jM}^{**}=\varphi^{n+1}_{1,jM},\text{\,}u_{i0}^{N}=\varphi^{N}_{1,i0},\text{\,}u_{iM}^{N}=
        \varphi^{N}_{1,iM},
        \end{equation*}
        \begin{equation}\label{45}
         u_{0j}^{N}=\varphi^{N}_{1,0j},\text{\,}u_{Mj}^{N}=\varphi^{N}_{1,Mj},\text{\,\,\,for\,\,\,} i,j=0,1,...,M.
        \end{equation}
        \begin{equation*}
        v_{ij}^{0}=v_{0}(x_{i},y_{j}),\text{\,}v_{i0}^{n}=\varphi^{n}_{2,i0},\text{\,}v_{iM}^{n}=\varphi^{n}_{2,iM},
        \text{\,}v_{0j}^{n}=\varphi^{n}_{2,0j},\text{\,}v_{Mj}^{n}=\varphi^{n}_{2,Mj},\text{\,\,}v_{0j}^{*}=\varphi^{n+1}_{2,0j},\text{\,}v_{Mj}^{*}=
        \varphi^{n+1}_{2,Mj}, \text{\,}v_{j0}^{*}=\varphi^{n+1}_{2,j0},
        \end{equation*}
        \begin{equation*}
        v_{jM}^{*}=\varphi^{n+1}_{2,jM},\text{\,\,\,}v_{0j}^{**}=\varphi^{n+1}_{2,0j},\text{\,}v_{Mj}^{**}=\varphi^{n+1}_{2,Mj},\text{\,}
        v_{j0}^{**}=\varphi^{n+1}_{2,j0},\text{\,}v_{jM}^{**}=\varphi^{n+1}_{2,jM},\text{\,}v_{i0}^{N}=\varphi^{N}_{2,i0},\text{\,}v_{iM}^{N}=
        \varphi^{N}_{2,iM},
        \end{equation*}
        \begin{equation}\label{54}
         v_{0j}^{N}=\varphi^{N}_{2,0j},\text{\,}v_{Mj}^{N}=\varphi^{N}_{2,Mj},\text{\,\,\,for\,\,\,} i,j=0,1,...,M.
        \end{equation}
        It is important to mention that equations $(\ref{48})$-$(\ref{54})$ denote a detailed description of the three-level explicit time-split MacCormack algorithm in a numerical solution of the initial-boundary value problem $(\ref{1})$-$(\ref{3}).$

         \section{Numerical experiments and Convergence rate}\label{sec3}
           In this section we present numerical evidences for the two-dimensional time dependent nonlinear coupled Burgers equations $(\ref{1})$-$(\ref{3}).$ The exact solutions used in our experiments are taken in \cite{cf}. For low Reynolds numbers, the test suggests that the proposed scheme is second order accurate in time and fourth order convergent in space while for low Mach numbers (i.e., high Reynolds numbers) the numerical examples show that the algorithm is second order convergent in both time and space. These observations confirm the theoretical analysis (see section $\ref{sec2}$, Page $7$, last paragraph) and the predicted results provided in the literature (for instance, see \cite{apt}, page 632). The convergence rate is obtained by listing in Tables $1$-$4$ the errors between the approximate solution and the analytical ones with different values of time step $k$ and grid spacing $h$ satisfying $k=\frac{R}{2}h^{2}$ and $k\leq h^{\frac{4}{3}}.$ Furthermore, we look at the error estimates of the method for the parameter $T=1$ and the Reynolds numbers $R\in\{2,64\}$.\\

          In the numerical tests, we assume that the mesh size $h\in\{\frac{1}{2},\frac{1}{2^{2}},\frac{1}{2^{3}}, \frac{1}{2^{4}},
        \frac{1}{2^{5}},\frac{1}{2^{6}},\frac{1}{2^{7}},\frac{1}{2^{8}}\}$ and time step $k\in\{\frac{1}{2^{2}},\frac{1}{2^{3}},\frac{1}{2^{4}},
        \frac{1}{2^{5}},\frac{1}{2^{6}},\frac{1}{2^{7}},\frac{1}{2^{8}},\frac{1}{2^{9}},\frac{1}{2^{10}}\frac{1}{2^{11}}\}$. Furthermore, we compute the error estimates: $\||E(\phi)|\|_{L^{2}(0,T;L^{2})},$ $\||E(\phi)|\|_{L^{\infty}(0,T;L^{2})}$ and $\||E(\phi)|\|_{L^{1}(0,T;L^{2})}$ (for $\phi=u,v$), associated with the three-level time-split scheme to demonstrate the efficiency and effectiveness of our method in two-dimensional case (stable, second order convergent in time and fourth order accurate in space). We plot the exact solution, computed solution and errors versus $n.$ It comes from this analysis that the three-level time-split MacCormack method is more fast and efficient than a wide range of numerical schemes widely studied in the literature. Finally, Tables $1$-$4$ suggest that the error terms $O(k^{\beta})+O(h^{\theta})$ are dominated by the h-terms $O(h^{\theta})$ ($k$-terms $O(k^{\beta})$). Thus, the numbers $\theta$ (respectively $\beta$) can be used to estimate the corresponding convergence rate with respect to $h$ (respectively, $k$). Define the norms for the numerical solution $\phi,$ the exact one $\overline{\phi},$ and the errors $E(\phi),$ as follows
         \begin{equation*}
         \||\phi|\|_{L^{2}(0,T;L^{2})}=\left[k\underset{n=0}{\overset{N}\sum}\|\phi^{n}\|_{L_{f}^{2}}^{2}\right]^{\frac{1}{2}};
         \text{\,\,}\||\overline{\phi}|\|_{L^{2}(0,T;L^{2})}=\left[k\underset{n=0}{\overset{N}\sum}\|\overline{\phi}^{n}\|_{L_{f}^{2}}^{2}\right]^{\frac{1}{2}};
         \end{equation*}
         \begin{equation*}
           \||E(\phi)|\|_{L^{2}(0,T;L^{2})}=\left[k\underset{n=0}{\overset{N}\sum}\|\phi^{n}-\overline{\phi}^{n}\|_{L_{f}^{2}}^{2}\right]^{\frac{1}{2}};
           \text{\,\,}\||E(\phi)|\|_{L^{1}(0,T;L^{2})}=k\underset{n=0}{\overset{N}\sum}\|\phi^{n}-\overline{\phi}^{n}\|_{L_{f}^{2}};
         \end{equation*}
         and
         \begin{equation*}
            \||E(\phi)|\|_{L^{\infty}(0,T;L^{2})}=\underset{0\leq n\leq N}{\max}\|\phi^{n}-\overline{\phi}^{n}\|_{L_{f}^{2}}.
         \end{equation*}

          $\bullet$ \textbf{Test.} Consider $\Omega$ be the unit square $(0,1)^{2}$ and $T=1,$ be the final time. The examples compare the numerical solutions with the exact ones to verify whether the proposed method leads to high accuracy. We assume that the Reynolds number $R\in\{2,64\},$ such that the exact solutions $\overline{u}$ and $\overline{v}$ taken in \cite{cf} are given by
       \begin{equation*}
       % \nonumber to remove numbering (before each equation)
         \overline{u}(x,y,t)=\frac{1}{4}\left[3-\left(1+\exp\left(R(-t-4x+4y)/32\right)\right)^{-1}\right],
       \end{equation*}
       \begin{equation*}
       % \nonumber to remove numbering (before each equation)
         \overline{v}(x,y,t)=\frac{1}{4}\left[3+\left(1+\exp\left(R(-t-4x+4y)/32\right)\right)^{-1}\right].
       \end{equation*}
       The initial and boundary conditions are determined by this solution. We recall that the mesh size and time step: $h\in\{\frac{1}{2},\frac{1}{2^{2}},
       \frac{1}{2^{3}},\frac{1}{2^{4}},\frac{1}{2^{5}},\frac{1}{2^{6}},\frac{1}{2^{7}},\frac{1}{2^{8}}\}$ and $k\in\{\frac{1}{2^{2}},\frac{1}{2^{3}},\frac{1}{2^{4}},\frac{1}{2^{5}},
       \frac{1}{2^{6}},\frac{1}{2^{7}},\frac{1}{2^{8}},\frac{1}{2^{9}},\frac{1}{2^{10}},\frac{1}{2^{11}}\}.$\\

           \textbf{Tables 1,2.} Analysis of convergence rate $O(h^{\theta}+\Delta t^{\beta})$ for the three-level time-split MacCormack under the time step restriction $(\ref{11**})$, that is, $\frac{2R^{-1}k}{h^{2}}\leq1$, for low Reynolds number (for example, $R=2$), varying time step $k=\Delta t$ and mesh grid $h=\Delta x=\Delta y$.\\

            \textbf{Table 1. $k=\frac{R}{2}h^{2}=h^{2}$.}
           $$\begin{tabular}{|c|c|c|c|c|c|c|}
            \hline
            % after \\: \hline or \cline{col1-col2} \cline{col3-col4} ...
            $h$ & $ \||E(u)|\|_{L^{2}}$ & $\||E(v)|\|_{L^{2}}$ & $\||E(u)|\|_{L^{\infty}}$ & $\||E(v)|\|_{L^{\infty}}$ & $ \||E(u)|\|_{L^{1}}$ & $\||E(v)|\|_{L^{1}}$ \\
            \hline
            $2^{-1}$ & $7.391\times10^{-4}$ & $7.391\times10^{-4}$ & $7.926\times10^{-4}$ & $7.926\times10^{-4}$& $7.316\times10^{-4}$ &$7.316\times10^{-4}$\\
            \hline
            $2^{-2}$ & $4.285\times10^{-4}$ & $4.285\times10^{-4}$ & $4.537\times10^{-4}$ & $4.537\times10^{-4}$ & $4.248\times10^{-4}$ &$4.248\times10^{-4}$\\
            \hline
            $2^{-3}$ & $3.671\times10^{-4}$ & $3.671\times10^{-4}$ & $3.957\times10^{-4}$ & $3.957\times10^{-4}$ & $3.594\times10^{-4}$ & $3.594\times10^{-4}$\\
            \hline
            $2^{-4}$ & $3.647\times10^{-4}$ & $3.647\times10^{-4}$ & $3.938\times10^{-4}$ & $3.938\times10^{-4}$ & $3.566\times10^{-4}$
            & $3.566\times10^{-4}$\\
            \hline
          \end{tabular}$$
               \text{\,}\\
              \textbf{ Table 2. $k=h$.}
          $$\begin{tabular}{|c|c|c|c|c|c|c|}
            \hline
            % after \\: \hline or \cline{col1-col2} \cline{col3-col4} ...
            $h$ & $ \||E(u)|\|_{L^{2}}$ & $\||E(v)|\|_{L^{2}}$ & $\||E(u)|\|_{L^{\infty}}$ & $\||E(v)|\|_{L^{\infty}}$ & $ \||E(u)|\|_{L^{1}}$ & $\||E(v)|\|_{L^{1}}$ \\
            \hline
            $2^{-1}$ & $0.0027$ & $0.0027$ & $0.0032$ & $0.0032$ & $0.0027$ & $0.0027$ \\
            \hline
            $2^{-2}$ & $18.9821$ & $18.9821$ & $37.9579$ & $37.9579$ & $9.6690$ & $9.6690$ \\
            \hline
            $2^{-3}$  & NaN & NaN & Inf & Inf & NaN & NaN \\
            \hline
          \end{tabular}$$
          \textbf{Table 1} and \textbf{Figure} $\ref{fig1}$ suggest that under time step restriction $(\ref{11**})$, the proposed numerical scheme is stable, second order convergent in time and fourth order accurate in space, while \textbf{Table 2} and Figures $\ref{fig2}$-$\ref{fig3}$ indicate that when the time step constraint $(\ref{11**})$ is not satisfied, the considered algorithm is neither stable nor convergent.\\

           \textbf{Tables 3,4.} Convergence rate $O(h^{\theta}+\Delta t^{\beta})$ for the three-level time-split MacCormack under time step restriction $(\ref{44})$ (that is, $\frac{k^{\frac{3}{4}}}{h}\leq1$), for high Reynolds numbers (for instance, $R=64$), with varying time step $k=\Delta t$ and mesh grid $h=\Delta x=\Delta y$.\\

            \textbf{Table 3. $k=\frac{1}{4}h\leq h^{\frac{4}{3}}$.}
           $$\begin{tabular}{|c|c|c|c|c|c|c|}
            \hline
            % after \\: \hline or \cline{col1-col2} \cline{col3-col4} ...
            $h$ & $ \||E(u)|\|_{L^{2}}$ & $\||E(v)|\|_{L^{2}}$ & $\||E(u)|\|_{L^{\infty}}$ & $\||E(v)|\|_{L^{\infty}}$ & $ \||E(u)|\|_{L^{1}}$ & $\||E(v)|\|_{L^{1}}$ \\
            \hline
            $2^{-3}$ & $3.9500\times10^{-2}$ & $3.9500\times10^{-2}$ & $5.8900\times10^{-2}$ & $5.8900\times10^{-2}$& $3.5500\times10^{-2}$ &$3.5500\times10^{-2}$\\
            \hline
            $2^{-4}$ & $3.3500\times10^{-2}$ & $3.3500\times10^{-2}$ & $4.6400\times10^{-2}$ & $4.6400\times10^{-2}$& $3.0400\times10^{-2}$ &$3.0400\times10^{-2}$\\
            \hline
            $2^{-5}$ & $3.2200\times10^{-2}$ & $3.2200\times10^{-2}$ & $4.3800\times10^{-2}$ & $4.3800\times10^{-2}$& $2.9400\times10^{-2}$ &$2.9400\times10^{-2}$\\
            \hline
            $2^{-6}$ & $3.1800\times10^{-2}$ & $3.1800\times10^{-2}$ & $4.3000\times10^{-2}$ & $4.3000\times10^{-2}$& $2.9000\times10^{-2}$ &$2.9000\times10^{-2}$\\
            \hline
            $2^{-7}$ & $3.1600\times10^{-2}$ & $3.1600\times10^{-2}$ & $4.2700\times10^{-2}$ & $4.2700\times10^{-2}$& $2.8800\times10^{-2}$ &$2.8800\times10^{-2}$\\
            \hline
          \end{tabular}$$
               \text{\,}\\
              \textbf{Table 4. $k=h>h^{\frac{4}{3}}$.}
          $$\begin{tabular}{|c|c|c|c|c|c|c|}
            \hline
            % after \\: \hline or \cline{col1-col2} \cline{col3-col4} ...
            $k$ & $ \||E(u)|\|_{L^{2}}$ & $\||E(v)|\|_{L^{2}}$ & $\||E(u)|\|_{L^{\infty}}$ & $\||E(v)|\|_{L^{\infty}}$ & $ \||E(u)|\|_{L^{1}}$ & $\||E(v)|\|_{L^{1}}$ \\
            \hline
            $2^{-3}$ & $4.9400\times10^{-2}$ & $4.9400\times10^{-2}$ & $7.6100\times10^{-2}$ & $7.6100\times10^{-2}$& $4.4200\times10^{-2}$ &$4.4200\times10^{-2}$\\
            \hline
            $2^{-4}$ & $3.8600\times10^{-2}$ & $3.8600\times10^{-2}$ & $5.3900\times10^{-2}$ & $5.3900\times10^{-2}$& $3.5200\times10^{-2}$ &$3.5200\times10^{-2}$\\
            \hline
            $2^{-5}$ & $3.4200\times10^{-2}$ & $3.4200\times10^{-2}$ & $4.6400\times10^{-2}$ & $4.6400\times10^{-2}$& $3.1300\times10^{-2}$ &$3.1300\times10^{-2}$\\
            \hline
            $2^{-6}$ & NaN  & NaN  & Inf & Inf & NaN  & NaN \\
            \hline
          \end{tabular}$$

          \textbf{Table 3} and Figure $\ref{fig4}$ indicate that for high Reynolds numbers and under time step restriction $(\ref{11**})$, the considered method is stable, second order convergent in both time and space, while \textbf{Table 4} shows that the scheme is neither stable nor convergent for smallest mesh size $h$, whenever the time step limitation $(\ref{11**})$ is not accomplished.

         \section{General conclusion and future works}\label{sec4}
             In this paper, we have discussed the convergence rate of the three-level explicit time-split MacCormack algorithm for solving the two-dimensional time dependent nonlinear coupled Burgers' equations $(\ref{1})$ subject to the initial and boundary conditions $(\ref{2})$-$(\ref{3})$. For low Reynolds numbers, the theoretical results have shown that the proposed algorithm is stable, second order convergent in time and fourth order accurate in space, while for large Reynolds numbers, the analysis has demonstrated that the considered method is second order convergent in time and space. All this analysis has been done under the time step requirement $(\ref{11**})$. The theoretical study is confirmed by a wide set of numerical experiments (Figures $\ref{fig1}$-$\ref{fig4}$ and \textbf{Tables 1-4}). The numerical experiments also show that our method is: (a) more fast and efficient than a broad range of numerical schemes for solving the initial-boundary value problem $(\ref{1})$-$(\ref{3})$; (b) fast and robust tools for the integration of general systems of hyperbolic/parabolic PDEs. Unfortunately, we have observed from the numerical examples that the time-split MacCormack method is not too much efficient for solving high Reynolds number flows where the viscous region is very thin. The mesh size must be highly refined in order to accurately resolve the viscous regions. The small grid spacing leads to very small time steps and subsequently long computing times. Indeed, in the coarse-grid region, the three-level explicit time-split MacCormack approach can be applied, while in the fine-grid region, the following sequence of nonlinear operators can be used
             \begin{equation*}
              \begin{pmatrix}
                    u_{ij}^{n+1} \\
                    v_{ij}^{n+1} \\
                  \end{pmatrix}=\left[L_{x}\left(\frac{k}{2m}\right)L_{y}\left(\frac{k}{m}\right)L_{x}
                  \left(\frac{k}{2m}\right)\right]^{m}\begin{pmatrix}
                    u_{ij}^{n} \\
                    v_{ij}^{n} \\
                  \end{pmatrix},
                \end{equation*}
              where $m$ is the smallest integer satisfying inequality: $\max\left\{\frac{2k}{mRh^{2}},\frac{k^{\frac{3}{4}}}{m^{\frac{3}{4}}h}\right\}\leq1.$
             Our future investigations will consider the last formula together with the three-level time-split MacCormack technique in the numerical solutions of two-dimensional unsteady viscous coupled Burgers' equations $(\ref{1})$-$(\ref{3})$.\\

            \textbf{Acknowledgment.} The author would like to thank the deanship of scientific research of Imam Muhammad Ibn Saud Islamic University (IMSIU) to financially support this work under the Grant No. 331203.

           \begin{figure}
         \begin{center}
          Stability and convergence of a three-level time-split MacCormack method: $a=\mu=1$.
          \begin{tabular}{cc}
              % after \\: \hline or \cline{col1-col2} \cline{col3-col4} ...
              \psfig{file=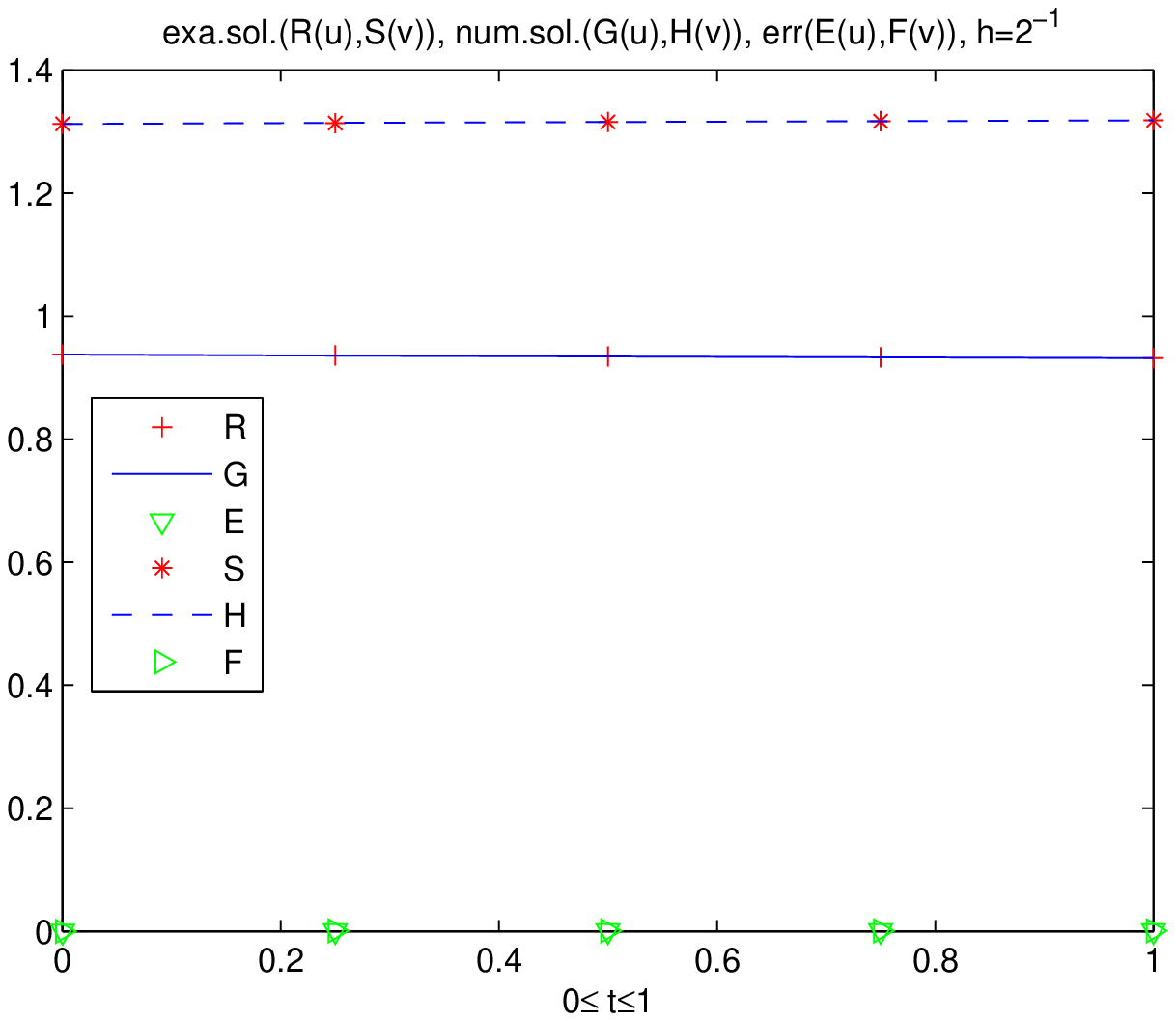,width=7cm} & \psfig{file=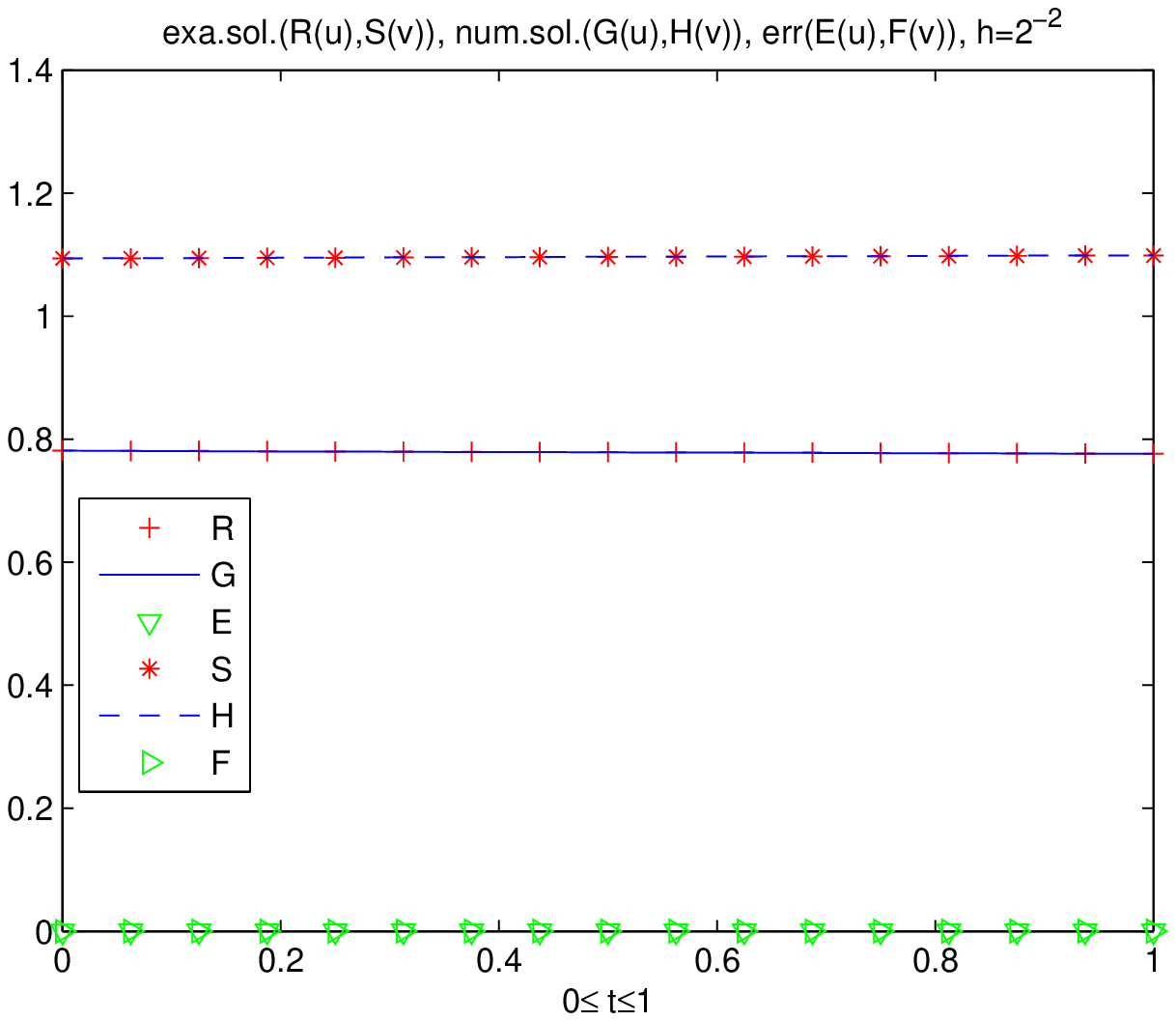,width=7cm}\\
               \psfig{file=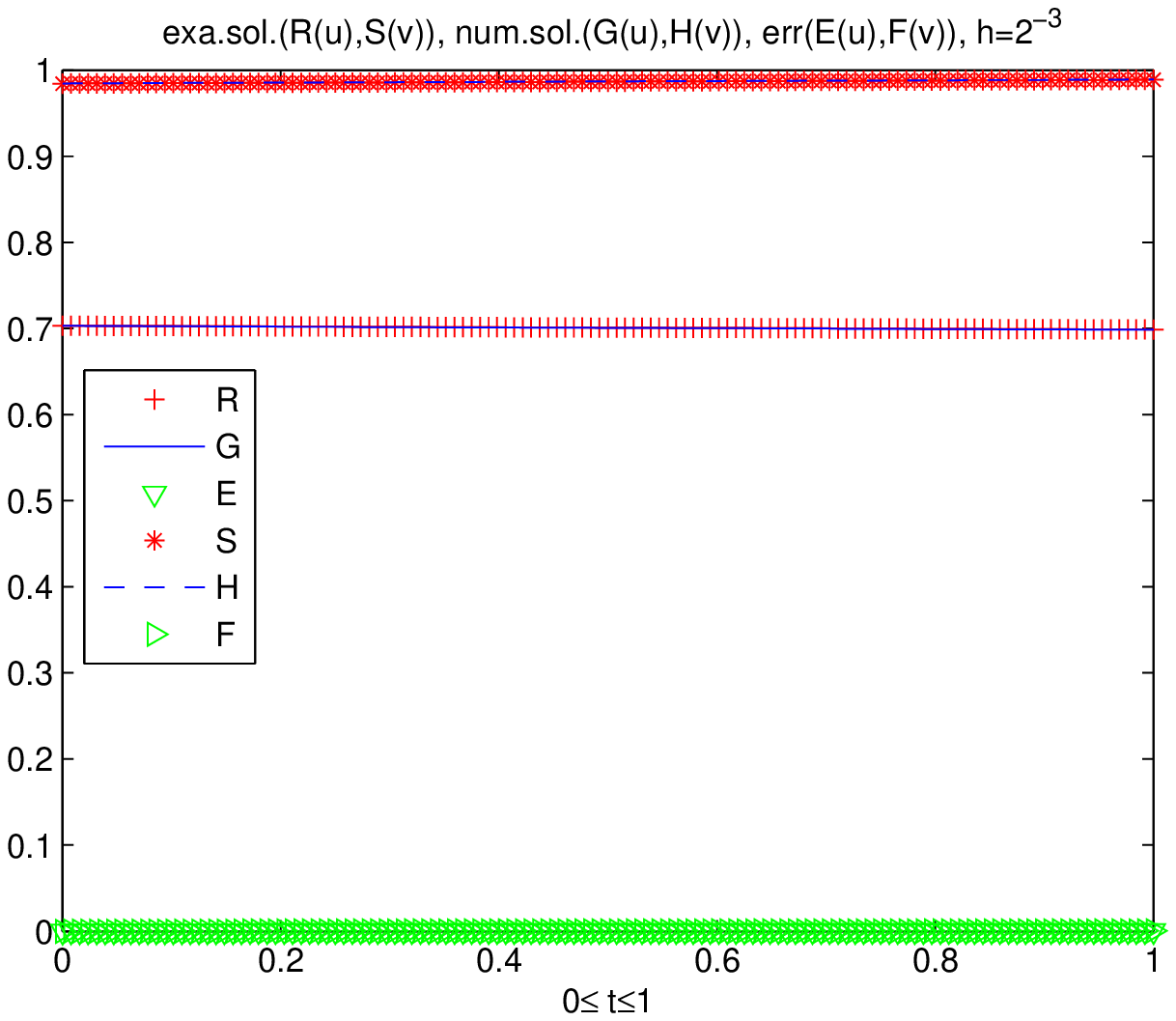,width=7cm} & \psfig{file=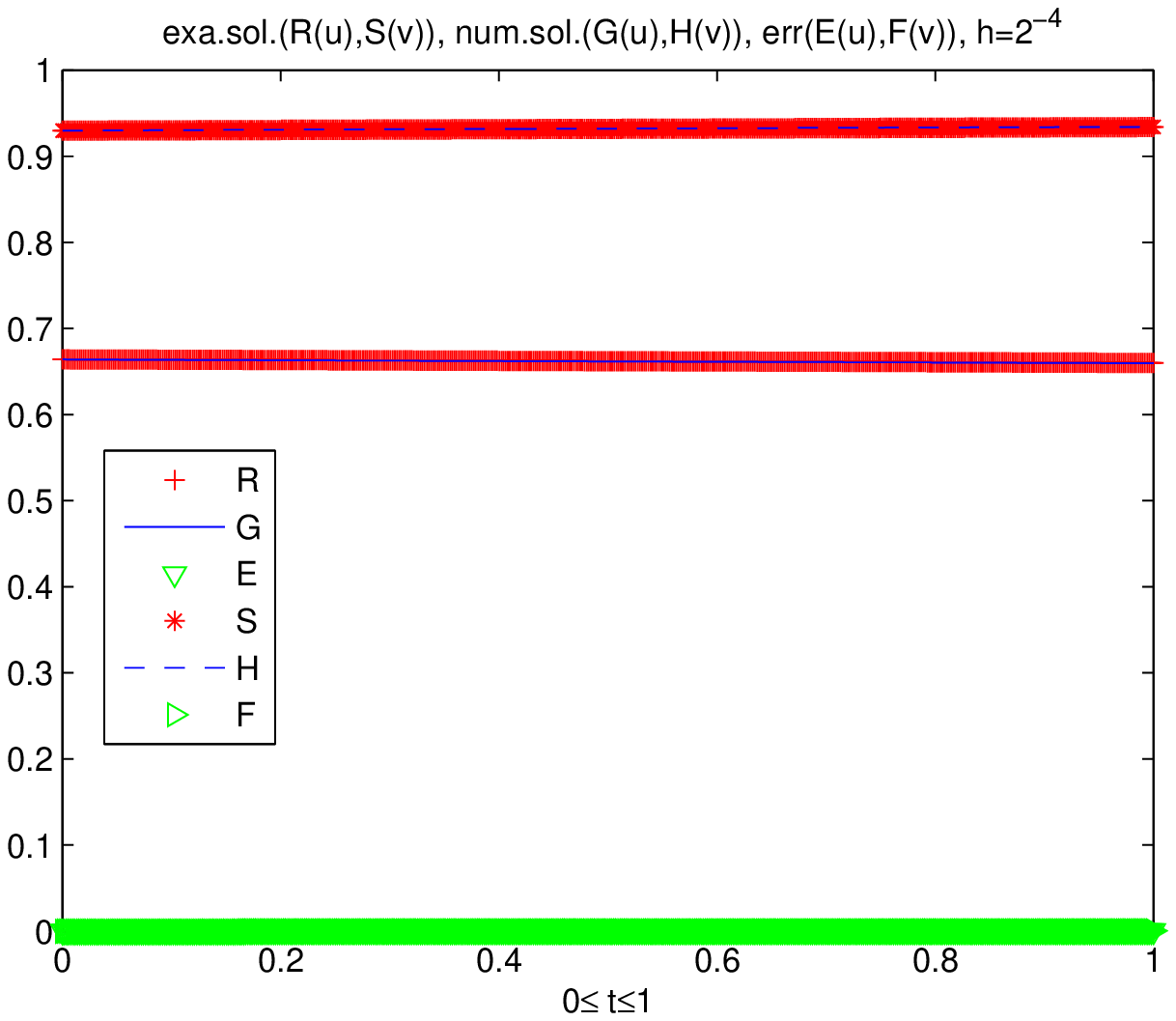,width=7cm}\\
            \end{tabular}
        \end{center}
         \caption{Graphs corresponding to a three-level time-split MacCormack method}
          \label{fig1}
          \end{figure}

          \begin{figure}
         \begin{center}
          Stability and convergence of a three-level time-split MacCormack method: $a=\mu=1$.
          \begin{tabular}{cc}
              % after \\: \hline or \cline{col1-col2} \cline{col3-col4} ...
              \psfig{file=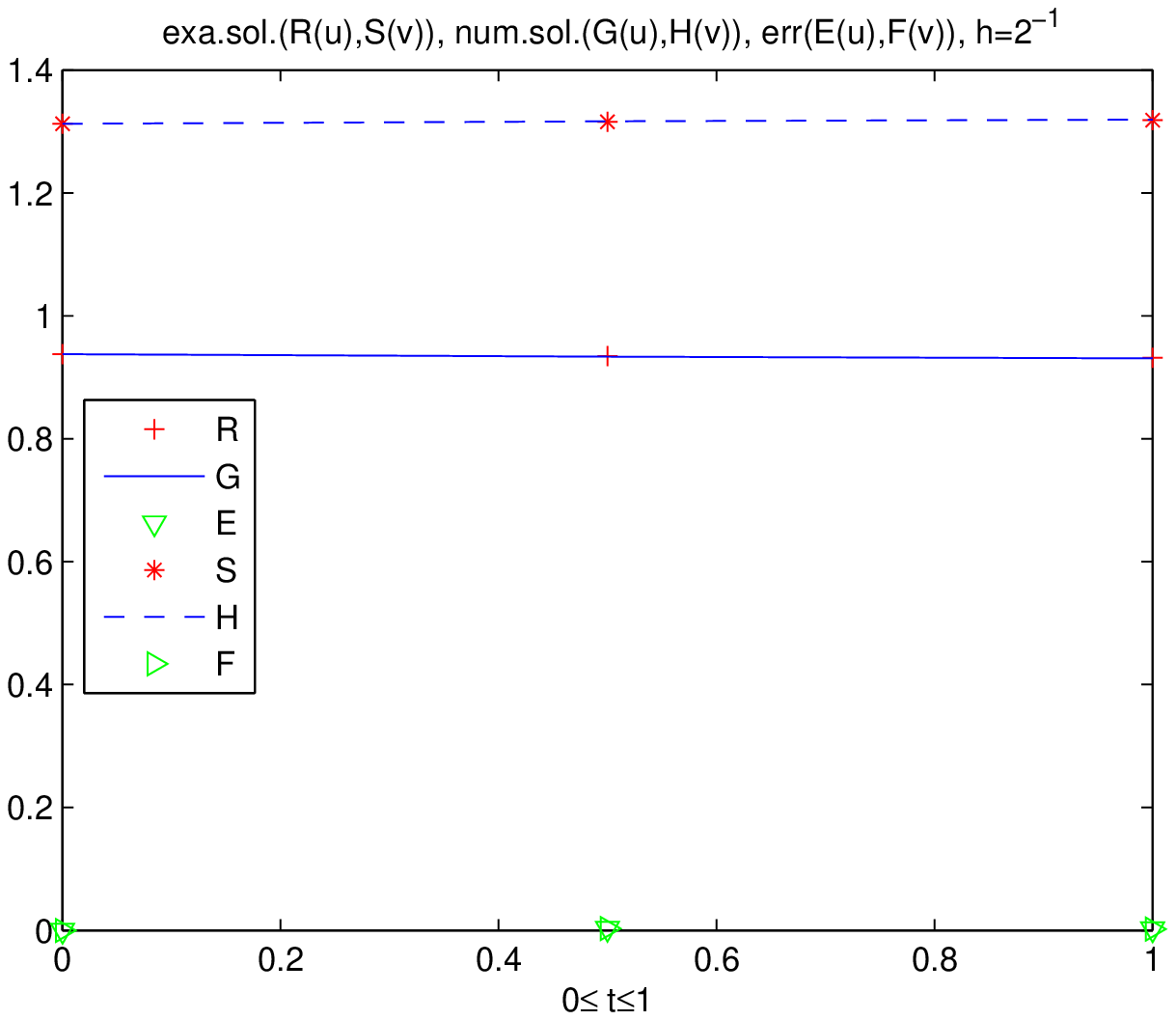,width=7cm} & \psfig{file=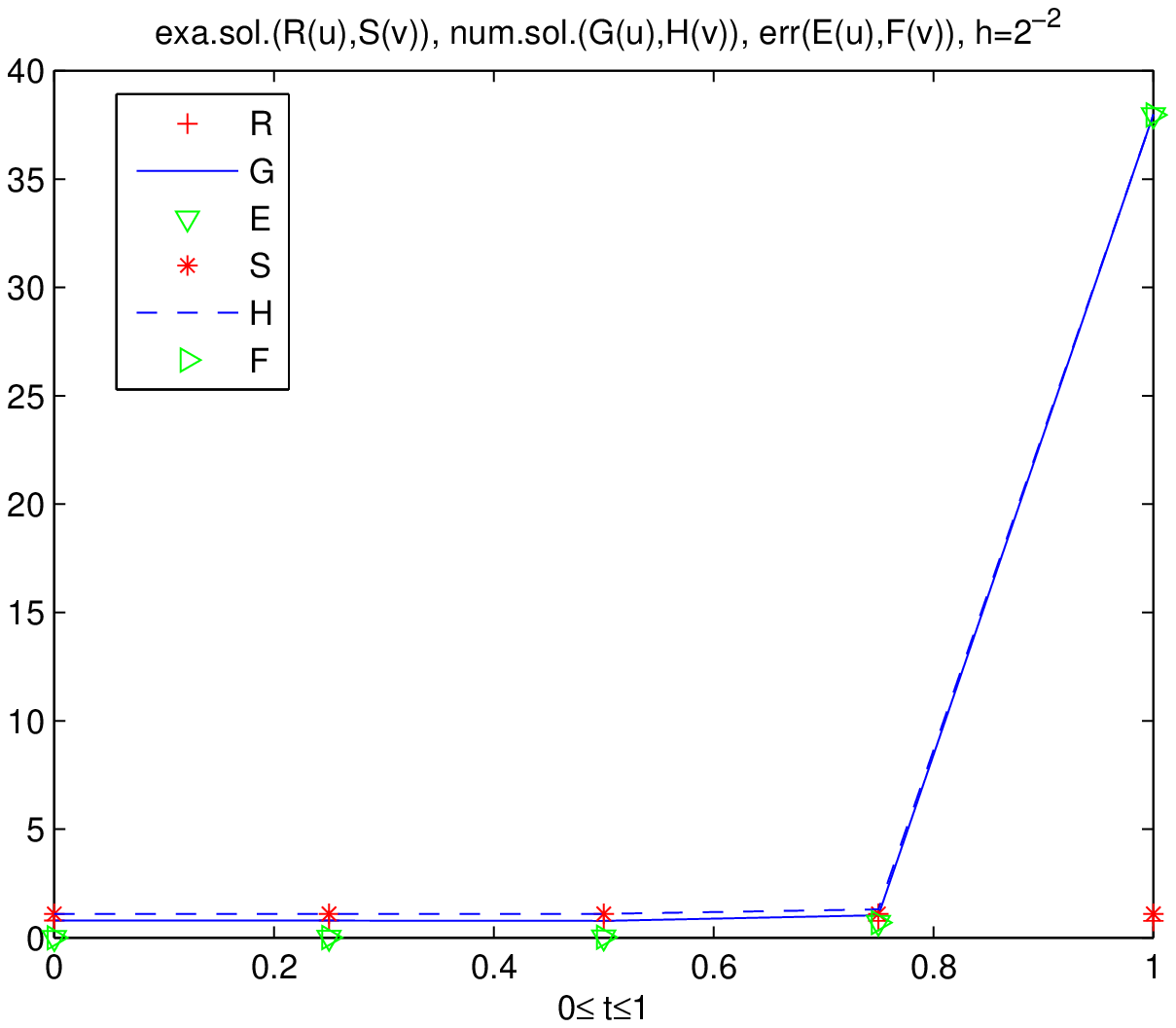,width=7cm}\\
            \end{tabular}
        \end{center}
         \caption{Graphs corresponding to a three-level time-split MacCormack method}
          \label{fig2}
          \end{figure}
          \begin{figure}
         \begin{center}
          Stability and convergence of a three-level time-split MacCormack method: $a=\mu=1$.
               \psfig{file=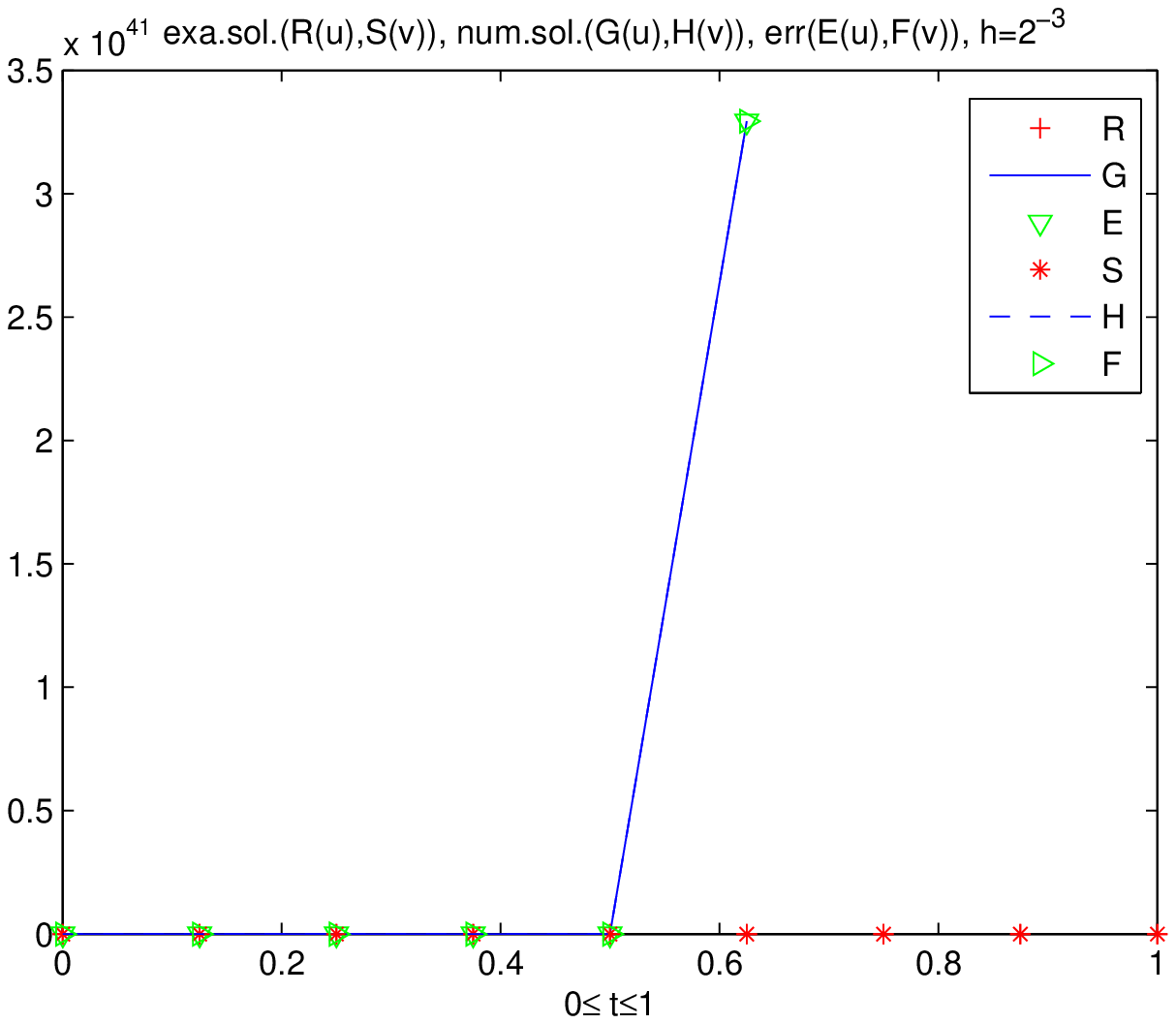,width=7cm}
        \end{center}
         \caption{Graphs corresponding to a three-level time-split MacCormack method}
          \label{fig3}
          \end{figure}

          \begin{figure}
         \begin{center}
          Stability and convergence of a three-level time-split MacCormack method: $a=\mu=1$.
          \begin{tabular}{cc}
              % after \\: \hline or \cline{col1-col2} \cline{col3-col4} ...
              \psfig{file=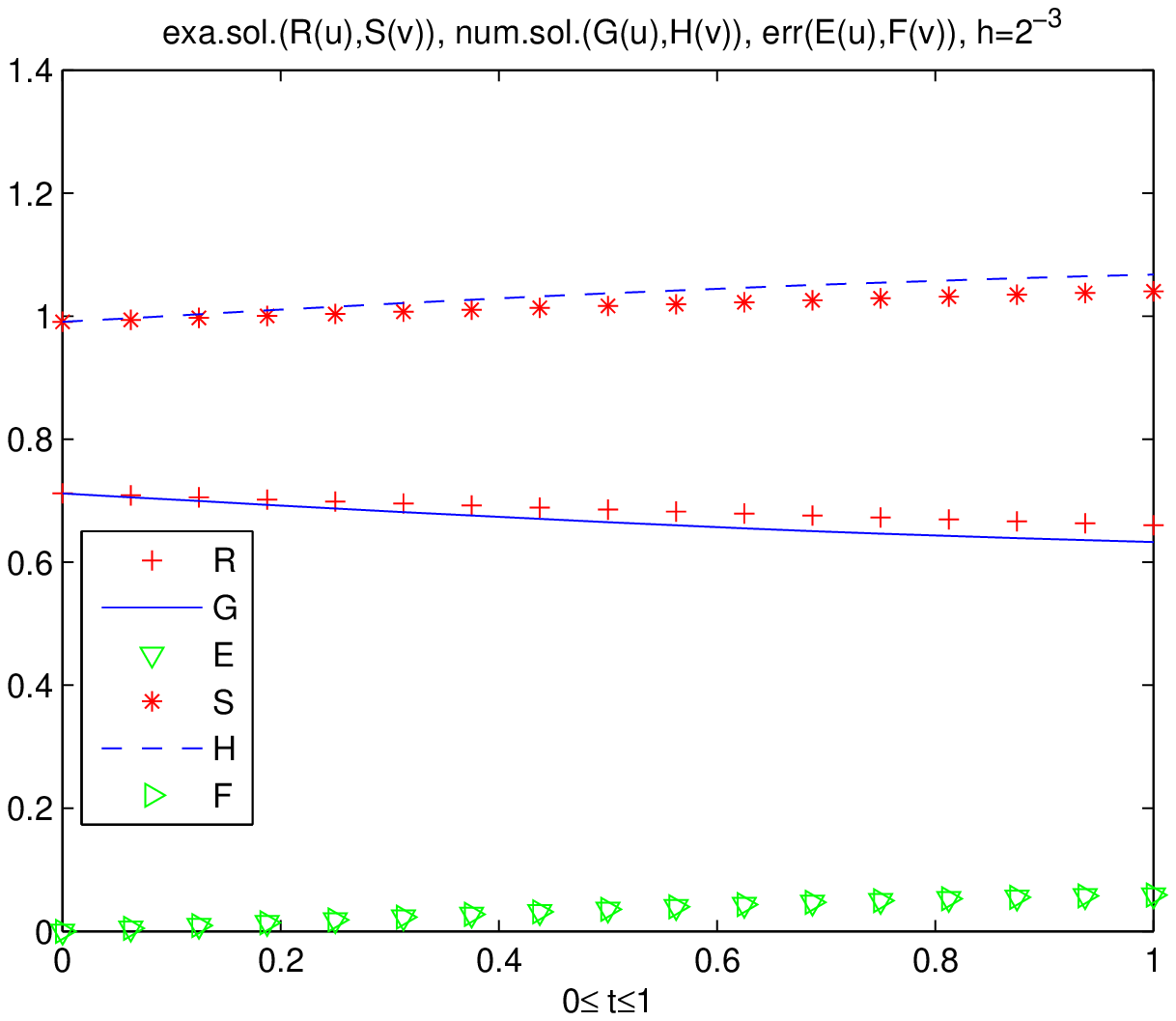,width=7cm} & \psfig{file=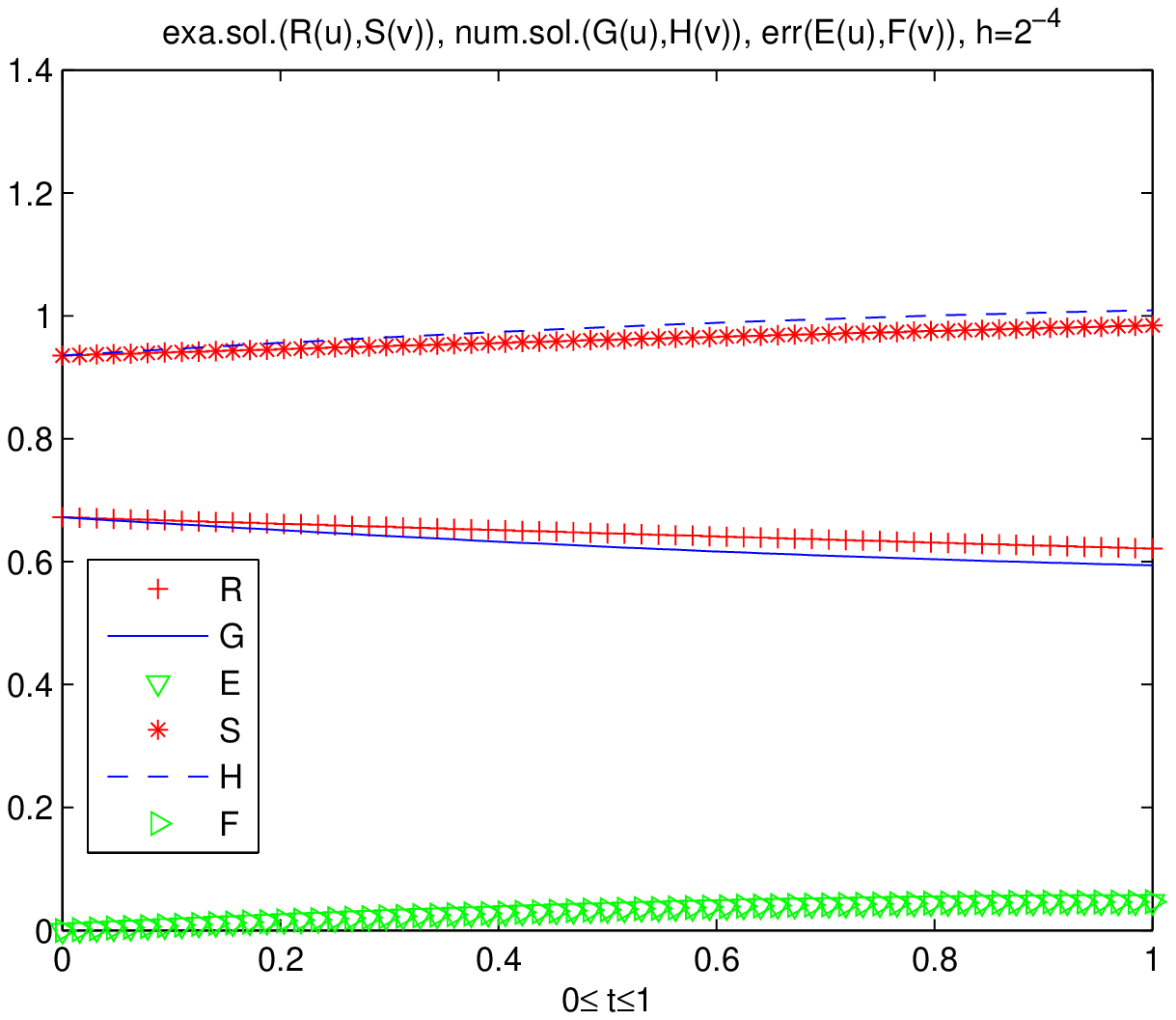,width=7cm}\\
              \psfig{file=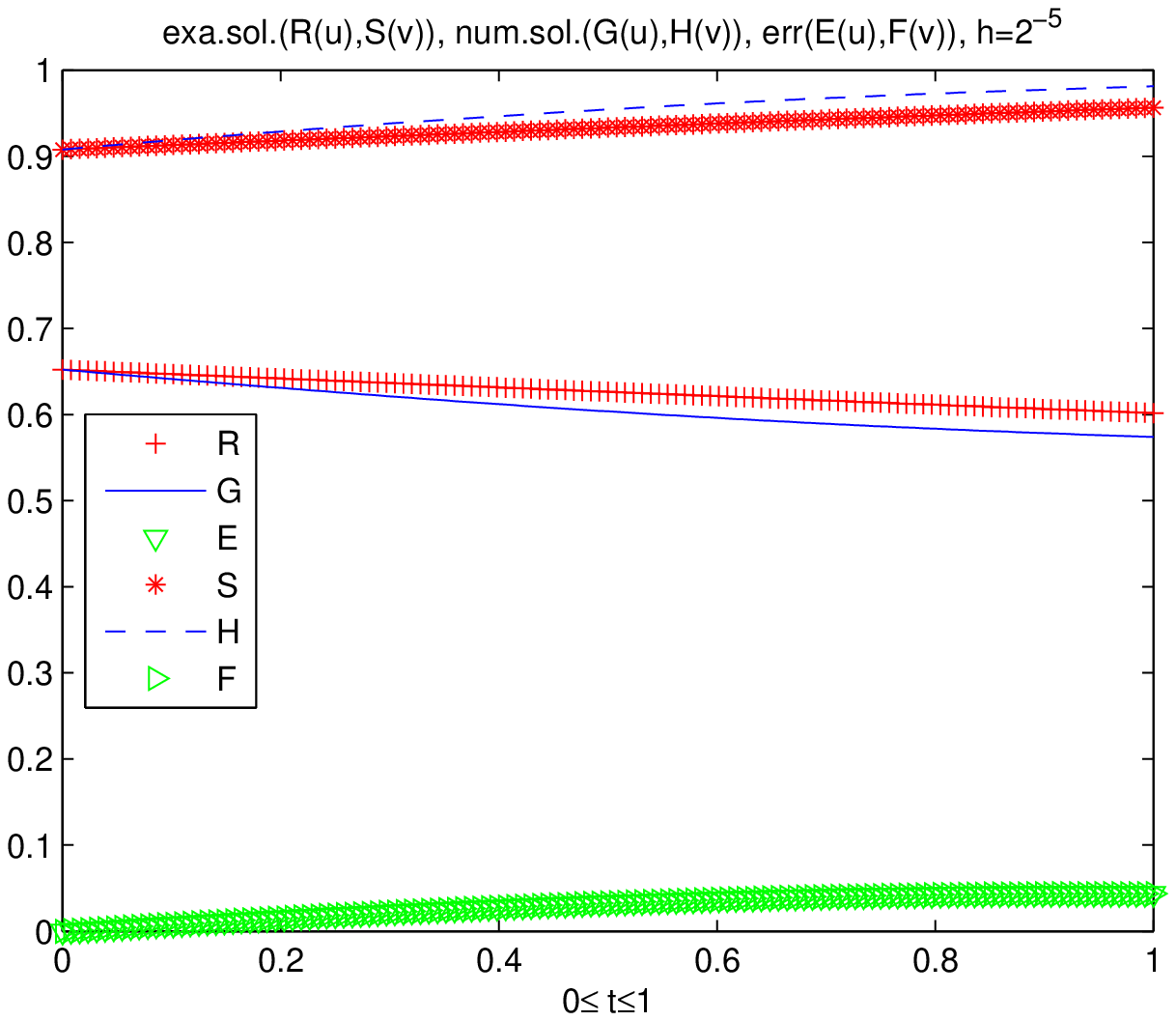,width=7cm} & \psfig{file=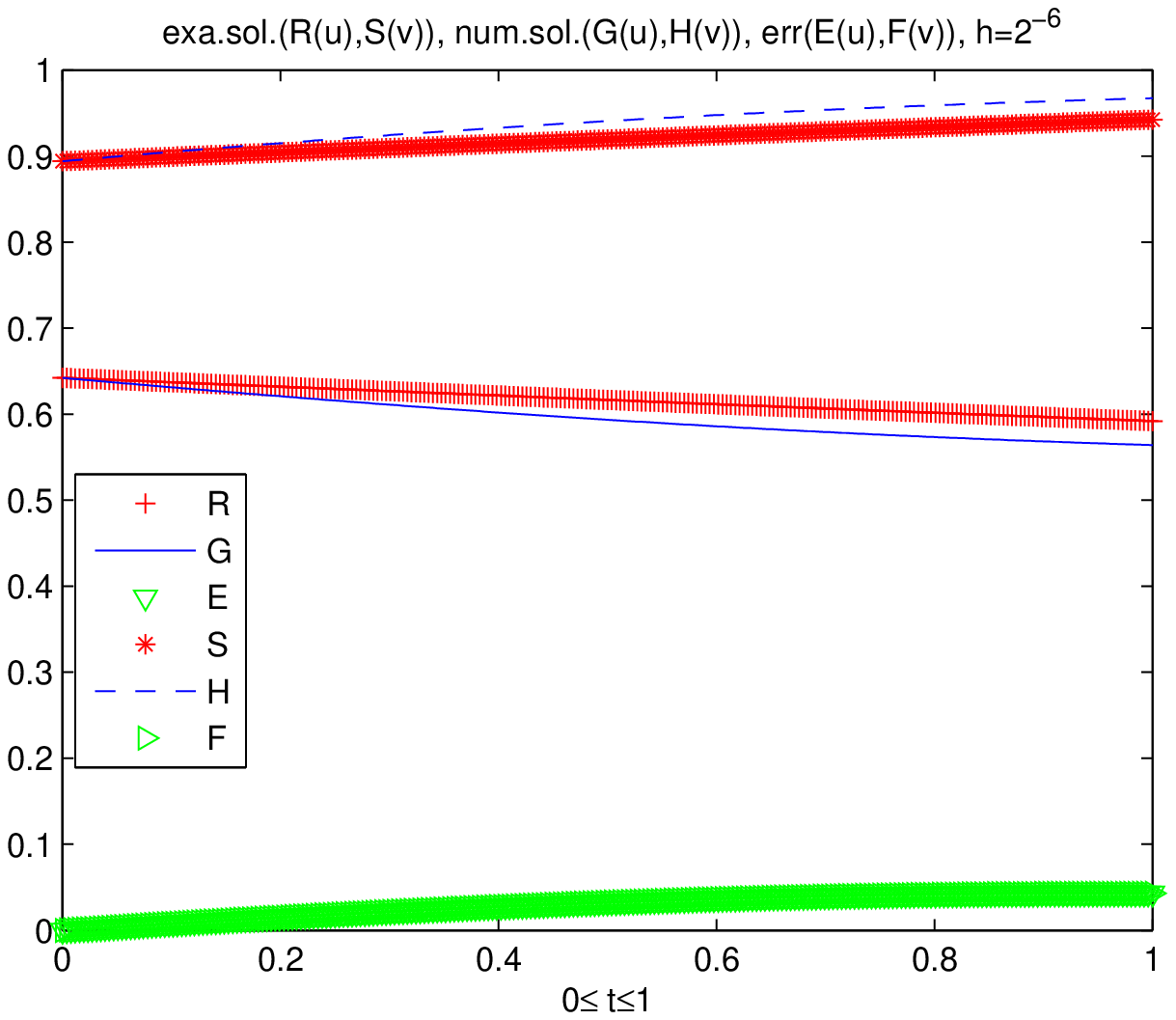,width=7cm}\\
            \end{tabular}
        \end{center}
         \caption{Graphs corresponding to a three-level time-split MacCormack method}
          \label{fig4}
          \end{figure}

          \newpage


\begin{thebibliography}{99}

    \bibitem{apt}
    F. A. Anderson, R. H. Pletcher, J. C. Tannehill. "Computational fluid mechanics and Heat Transfer". Second Edition, Taylor and Francis, New York, $(1997).$

    \bibitem{ba}
    A. Bahadir. "A fully implicit finite-difference scheme for two-dimensional Burgers' equations". Appl. Math. Comput., Vol. $137$, $(2003),$ pp. $131$-$137.$

    \bibitem{1zsd}
    M. Basto, V. Semiao, F. Calheiros. "Dynamics and synchronization of numerical solutions of the Burgers' equation". J. Comput. Appl. Math., Vol. $231$, $(2009),$ pp. $793$-$806.$

    \bibitem{bj}
    M. Burgers. "A mathematical model illustrating the theory of turbulence". Adv. in Appl. Mech., Vol. $1$, $(1948),$ pp. $171$-$199.$

    \bibitem{md}
    M. Dehghan. "Time-split procedures for the solution of the two-dimensional transport equations". Kybernetes, Vol. $36(5/6)$, $(2007b),$ pp. $791$-$805.$

    \bibitem{dhs}
    M. Dehghan, A. Hamidi, M. Shakourifar. "The solution of coupled Burgers' equations using adomain-pade technique". Appl. Math. Comput., Vol. $189$, $(2007),$ pp. $1034$-$1047.$

    \bibitem{ds}
    M. Dehghan, R. Salehi. "The chebyshev spectral viscosity method for the time dependent Eikonal equation". Mathematical and Computer Modelling, Vol. $197$, $(2010),$ pp. $70$-$86.$

    \bibitem{dg}
    M. Dehghan, A. Ghesmati. "Combination of meshless local weak and strong (MLWS) forms to solve the two-dimensional hyperbolic telegraph equation". Engineering Analysis with Boundary Elements, Vol. $34$, $(2010),$ pp. $324$-$336.$

    \bibitem{el}
    S. M. El-Sayed, D. Kaya. "On the numerical solution of the system of two-dimensional Burgers' equations by decomposition method". Appl. Math. Comput., Vol. $158$, $(2004),$ pp. $101$-$109.$

    \bibitem{cf}
    C. A. J. Fletcher. "Generating exact solution of the two-dimensional Burgers' equations". Int. J. Numer. Meth. Fluids, Vol. $3$, $(1983b),$ pp. $213$-$216.$

    \bibitem{cf1}
    C. A. J. Fletcher. "A comparison of finite element and finite difference solution of the one and two-dimensional Burgers' equations". J. Comput. Phys., Vol. $51$, $(1983a),$ pp. $159$-$188.$

    \bibitem{go}
    O. Goyon. "Multilevel schemes fpr solving unsteady equations". Int. J. Numer. Meth. Fluids, Vol. $22$, $(1996),$ pp. $937$-$959.$

    \bibitem{kth}
    H. Khater, R. S. Temsah, M. M. Hassan. "A chebyshev spectral collocation method for solving Burgers'-type equations". J. Comput. Appl. Math., Vol. $222(2)$, $(2008),$ pp. $333$-$350.$

    \bibitem{ked}
    S. Kutluay. A. Esen, I. Dag. "Numerical solution of the Burgers' equation by the least-squares quadratic B-spline finite element method". J. Comput. Appl. Math., Vol. $167$, $(2004),$ pp. $21$-$33.$

    \bibitem{13tls}
    P. D. Lax, B. Wendroff. "Systems of conservation laws", Comm. Pure $\&$ Appl. Math. $13$ $(1960)$ $217$-$237.$

    \bibitem{lw}
    W. Liao. "A fourth-order finite-difference method for solving the system of two-dimensional Burgers' equations". Int. J. Numer. Meth. Fluids, Vol. $64(5)$, $(2009),$ pp. $565$-$590.$

    \bibitem{18tls}
    R. W. MacCormack. "An efficient numerical method for solving the time-dependent compressible Navier-Stokes equations at high Reynolds numbers", NASA TM $(1976)$ $73$-$129.$

    \bibitem{19tls}
    R. W. MacCormack. "A numerical method for solving the equations of compressible viscous-flows", AIAA paper $81$-$0110,$ St. Louis, Missouri $(1981).$

    \bibitem{22tls}
    R. W. MacCormack, A. J. Paullay. "Computational efficiency achieved by time splitting of finite difference operators", AIAA paper $72$-$154,$ San Diego, California $(1972).$

    \bibitem{mj}
    R. C. Mittal, R. Jiwari. "Zakharov-Kuznetsov-Burgers' equation for dust ion acoustic waves". Chaos Solitons fractals, Vol. $36$, $(2008),$ pp. $628$-$634.$

    \bibitem{3zsd}
    W. M. Moslem, R. Sabry. "A fourth-order finite-difference method for solving the system of two-dimensional Burgers' equations". Int. J. Numer. Meth. Fluids, Vol. $64(5)$, $(2009),$ pp. $565$-$590.$

    \bibitem{25tls}
    F. T. Namio, E. Ngondiep, R. Ntchantcho, J. C. Ntonga. "Mathematical models of complete shallow water equations with source terms, stability analysis of Lax-Wendroff scheme", J. Theor. Comput. Sci., Vol. $2(132)$ $(2015).$

    \bibitem{26tls}
    E. Ngondiep. "Stability analysis of MacCormack rapid solver method for evolutionary Stokes-Darcy problem", J. Comput. Appl. Math. $345(2019)$, $269$-$285$, $17$ pages.

    \bibitem{en2ts}
     E. Ngondiep. "A three-level explicit time-split MacCormack method for $2$D nonlinear reaction-diffusion equations", preprint available online from http://arxiv.org/abs/1903.10877, 2019.

    \bibitem{27tls}
    E. Ngondiep. "Long Time Stability and Convergence Rate of MacCormack Rapid Solver Method for Nonstationary Stokes-Darcy Problem", Comput. Math. Appl., Vol $75$, $(2018)$, $3663$-$3684,$ $22$ pages.

     \bibitem{en1ts}
     E. Ngondiep. "An efficient three-level explicit time-split method for solving $2$D heat conduction equations", submitted.

    \bibitem{28tls}
    E. Ngondiep. "Long time unconditional stability of a two-level hybrid method for nonstationary incompressible Navier-Stokes equations", J. Comput. Appl. Math. $345(2019)$, $501$-$514$, $14$ pages.

    \bibitem{en}
     E. Ngondiep. "Asymptotic growth of the spectral radii of collocation matrices approximating elliptic boundary problems", Int. J. Appl. Math. Comput.,
     $4(2012)$, $199$-$219,$ $20$ pages.

     \bibitem{tsmclcdr}
     E. Ngondiep. "A novel three-level time-split MacCormack scheme for two-dimensional evolutionary linear convection-diffusion-reaction equation with source term", submitted.

    \bibitem{29tls}
    E. Ngondiep. "Error estimate of MacCormack rapid solver method for 2D incompressible Navier-Stokes problems",  preprint available online from http://arxiv.org/abs/1903.10857, 2019.

    \bibitem{tsmccd}
    E. Ngondiep. "Analysis of stability and convergence rate of a three-level time-split method for two-dimensional unsteady convection-diffusion equation",  submitted.

    \bibitem{tsmccdr}
    E. Ngondiep. "A new three-level time-split explicit method for two-dimensional nonlinear unsteady convection-diffusion-reaction equation", submitted.

    \bibitem{30tls}
    E. Ngondiep, R. Alqahtani and J. C. Ntonga. "Stability analysis and convergence rate of MacCormack scheme for complete shallow water equations with source terms", preprint available online from http://arxiv.org/abs/1903.11104, 2019.

    \bibitem{ri}
    A. Rashid, A. I. B. M. Ismail. "A Fourier pseudospectral method for solving coupled Burgers' equations". Comput. Meth. Appl. Math., Vol. $9(4),$ $(2009),$ pp. $412$-$420.$

    \bibitem{2zsd}
    M. M. Rashidi, E. Erfani. "New analytical method for solving two-dimensional Burgers' and nonlinear heat transfer equations and comparison with HAM". Comput. Phys. Commun., Vol. $180$, $(2009),$ pp. $1539$-$1544.$

    \bibitem{zsd}
    H. Zhu, H. Shu, M. Ding. "Numerical solutions of two-dimensional Burgers' equations by discrete Adomain decomposition method". Comput. Math. Appl., Vol. $60$, $(2010),$ pp. $840$-$848.$

     \end{thebibliography}
     \end{document}